\documentclass[11pt]{amsart}

\usepackage{amsmath,color}
\usepackage{amsthm}
\usepackage{amssymb}
\usepackage{graphicx}
\usepackage{hyperref}

\setlength{\textheight}{23cm} \setlength{\textwidth}{15.4cm}
\setlength{\topmargin}{-0.5cm}
\setlength{\oddsidemargin}{0.6cm} \setlength{\evensidemargin}{0.6cm}

\numberwithin{equation}{section}

\definecolor{darkgreen}{rgb}{0,0.55,0}

\newcommand{\chara}{1\!\!1}

\newcommand{\wt}{\widetilde}

\newcommand{\e}{\varepsilon}

\newcommand{\vp}{\varphi}

\newcommand{\R}{{\mathbb R}}

\newcommand{\sign}{\operatorname {sign}}
\newcommand{\yt}{{y^\tau}}
\newcommand{\yn}{{y^n}}

\newcommand{\rr}{r}
\newcommand{\divf}{\operatorname{div}}

\newcommand{\beq}{\begin{equation}}
\newcommand{\eeq}{\end{equation}}

\newtheorem{theorem}{Theorem}[section]
\newtheorem{lemma}[theorem]{Lemma}
\newtheorem{proposition}[theorem]{Proposition}

\newtheorem{remark}[theorem]{Remark}

\begin{document}

\author{ Bernardo Galv\~ao-Sousa \and Robert L. Jerrard }
\address{Department of Mathematics, University of Toronto, Toronto, Ontario, Canada}\email{beni@math.toronto.edu}
\address{Department of Mathematics, University of Toronto, Toronto, Ontario, Canada}\email{rjerrard@math.toronto.edu}

\thanks{The second author was partially supported by the National Science and
Engineering Research Council of Canada under operating Grant 261955.}

\title[Accelerating fronts in semilinear wave equations]
{Accelerating fronts in semilinear wave equations}

\begin{abstract} 
We study dynamics of interfaces in solutions of the equation
$
\e \Box u + \frac 1 \e f_\e(u)=0$,
for $f_\e$ of the form $f_\e(u) = (u^2-1)(2u- \e\kappa)$, for $\kappa\in \R$,
as well as more general, but qualitatively similar, nonlinearities.
We prove that for suitable initial data, solutions exhibit 
interfaces that sweep out  timelike hypersurfaces of mean curvature
proportional to
$\kappa$. In particular,  in $1$ dimension these interfaces 
behave like a relativistic point particle subject to
constant acceleration.
\end{abstract}
\maketitle

\vspace{.2in}

\section{introduction}

In this paper we consider the dynamics of interfaces in semilinear hyperbolic equations. The simplest example that we study is the equation
\beq
\e( u_{tt} - u_{xx}) + \frac 1 \e(u^2-1)(2u - \e \kappa) = 0,
\label{nlw2}\eeq
where we assume for concreteness that $\kappa>0$.
Here the nonlinearity $f_\e(u) = (u^2-1)(2u - \e \kappa)$ has the form $f_\e = F_\e'$, where $F_\e$
has local minima at $u=\pm 1$, with $F_\e( \pm1) = \pm \frac 23  \e \kappa$. 
Thus  the state $u=-1$ has slightly lower energy than the state $u=1$, and one might expect that there exist solutions in which the low-energy phase $u=-1$ grows at the expense of the higher-energy phase.
This is what we prove. In fact we show that, for suitable initial data, solutions exhibit an interface that
behaves like a relativistic mass subject to constant acceleration proportional to the parameter $\kappa$.
Equivalently, the interface sweeps out a timelike curve of constant {\em Minkowskian} curvature, proportional to  $\kappa$, in the $(t,x)$-plane.

It turns out that our analysis extends with rather
 few changes to 
wave equations on suitable Lorentzian manifolds $(N, {\bf h})$. Thus we will also consider the equation
\beq
\e \Box_{\bf h} u + \frac  1 \e f_0(u)  +  \kappa f_1(u)= 0 
\label{nlw3}\eeq
where $\Box_{\bf h}$ is the Laplace-Beltrami (wave) operator on $(N, {\bf h})$,
$\kappa$ is a smooth function, and $\frac 1\e f_0(u) + \kappa f_1(u)$ generalizes
the nonlinearity in \eqref{nlw2} in a natural way. 
In this situation, analogous to \eqref{nlw2}, we show that for well-prepared data, interfaces sweep out 
timelike hypersurfaces of {\em prescribed } mean curvature $\kappa$, with respect
to the Lorentzian metric ${\bf h}$.

In the case when $(N, {\bf h})$ is just $1+n$-dimensional Minkowski space
and $\kappa\equiv 0$, corresponding to the situation when the two potential wells have equal depth, similar results were proved by the first author in \cite{j-mms}, following partial
results of \cite{bno}. 
Thus, the present paper consists of a number of improvements of the basic argument
developed in \cite{j-mms}:  we extend the results to the case $\kappa\ne 0$, 
we show that they remain valid
on Lorentzian manifolds more general than Minkowski space, and we
drop some convenient but artificial restrictions imposed in \cite{j-mms} on the topological type of 
the hypersurfaces considered. A key point in our analysis is that 
if $\kappa$ is a nonzero constant, then in certain weighted energy estimates 
it is much more useful to use, not the canonical conserved energy associated to
the actual equation \eqref{nlw3} under study, but rather the conserved energy {\em associated to the
$\kappa = 0$ equation}. (See Remark \ref{keypoint}.)
This  simple observation plays a crucial role in our arguments and makes the extension 
of techniques developed in \cite{j-mms} to the more general situation considered here surprisingly straightforward.

\smallskip

Equations such as \eqref{nlw3}, with $\kappa \ne 0$, have been studied in the cosmological literature as  models
for what is called the decay of a false vacuum. 
This arises from models in which the 
universe is described by a quantum field theory for which an equation
like \eqref{nlw3} (or a more complicated but in some ways similar equation) is a low-energy limit, and whose state is initially given by a constant function $u \equiv v_f$, where $v_f$ is a ``false vacuum": a local, {\em but not global}, minimum of some underlying potential function. In the example \eqref{nlw2},
if $\kappa>0$ then $v_f = 1$, and the ``true vacuum", or global minimizer
of the potential function $F_\e$, is   $v_t =-1$.
In this situation, a quantum tunnelling event could in principle lead to the nucleation of region in which $u = v_t$. This scenario was investigated  in a series of papers by Coleman and coworkers;
see for example \cite{Coleman77}, which estimates via a formal semiclassical approximation the probability per unit time per unit volume of such a tunneling event.
Our results have nothing to say about this, but describe the dynamics of a fully-formed interface
between false and true vacuums in a universe governed by \eqref{nlw3}, showing that if
the interface has an energetically optimal structure, then it behaves like a hypersurface
of constant Lorentzian mean curvature proportional to the difference in energy between the true and false vacuums. 


Earlier work on dynamics of energy concentration sets in hyperbolic
equations includes \cite{j-wave, l-wave, gs, stu}, all of which 
consider situations in which energy concentrates around points
rather than submanifolds, as in \cite{bno, j-mms} and the present paper.
The dynamics of interfaces in equations such as
\eqref{nlw3} is studied from a formal point of view in \cite{rn}. 

A lengthy discussion of related elliptic and parabolic results,
with a heavy bias toward the $\kappa=0$ case, 
is contained in \cite{j-mms}. 
For $\kappa \ne 0$,
there is a rather strong analogy between the phenomena we study
and propagating fronts in semilinear parabolic equations, a subject that
has attracted a great deal of study, dating back to the 1930s \cite{kpp}. In particular,  the problem that 
formally determines the profile and (relativistic) acceleration of interfaces
(see \eqref{trwave1} or \eqref{trwave3})
is exactly the same one that determines the profile and
velocity of propagating fronts in certain parabolic problems,
see for example \cite{aw, fm, bss}.
There is also an analogy between our work and
results that establish an asymptotic connection
between elliptic analogs of \eqref{nlw3} and
surfaces of prescribed Euclidean (or more generally Riemanian) 
mean curvature, see for example \cite{pr,ht}.

\smallskip
This paper is organized as follows: In order to highlight some main ideas with as few preliminaries as possible,
we consider in Section \ref{S:simple}  the case \eqref{nlw2} of an equation in one space  dimension associated with two potential wells of unequal depth. This discussion isincluded just to illustrate our arguments in a simple setting, and is not needed in
later pars of the paper.

We therefore defer until Section  \ref{S:statement}
both the statement of our main result, and the introduction
of some notation that is used throughout the rest of the paper.
In Section \ref{S:adapted} we introduce a coordinate system in
which many of our main estimates will take place, stating the
properties that we will need and deferring most proofs to
Section  \ref{S:Phi}. The heart of our argument consists
of weighted energy estimates in this adapted coordinate system.
These are carried out in Section \ref{S:weenc}. In Section
\ref{S:pot3.2}, these estimates are combined with rather standard
energy estimates away from the interface in a iterative
argument that completes the proof of our main theorem.


\section{the simplest nontrivial equation}\label{S:simple}

In this section we consider the $1$-dimensional equation \eqref{nlw2}.
All the  results  in this section are essentially subsumed in Theorem \ref{T.mink},
and most of the main ideas in Theorem \ref{T.mink}
appear here, in somewhat simpler form.

It is convenient to consider initial data such that at $t=0$, 
\beq
(u, u_t) = (-1, 0)\mbox{ for all $x$ near 0},\quad\quad
(u,u_t) = (1,0)
\mbox{ for all $x \ge R$}
\label{1ddata1}\eeq
for some $R$. (More conditions on the data will be imposed later.)
Noting that the constant functions 
$\pm 1$ are both  solutions of \eqref{nlw2},
standard facts about finite propagation speed for solutions of \eqref{nlw2} imply that
\beq
u(t,x) = -1\mbox{ for $(t,x)$ near }(0,0), \quad\quad
u(t,x) = 1\mbox{ for }x \ge R + |t|.
\label{1dgbc}\eeq

\subsection{change of variables}
As suggested above, one might guess that for suitable initial data, solutions will exhibit an interface that sweeps
out a timelike curve of constant (nonzero) Minkowskian curvature proportional to the parameter $\kappa$ that controls to the difference in depth of the two energy wells. Such curves have
the form $\{ (t,x) : x^2 - t^2 = c, x>0\}$ modulo translations and reflections. We thus start by changing variables in such a way as to ``straighten out"   a $1$-parameter family of such curves. Thus, 
we introduce new coordinates $(\theta,\rr)$ defined (for $\theta\in \R, \rr>0$) by 
\beq
(t,x) = (\rr\sinh \theta ,\rr\cosh \theta ) = \psi(\theta, \rr) \in \{\, (t,x) :  |t| < x \}. 
\label{psi.def}\eeq
These are just Minkowskian polar coordinates, with $\theta$ being the angular and $\rr$ the radial coordinate. Note that every coordinate line $\rr = r_0$ is a timelike curve of constant curvature $\frac 1{r_0}$ with respect to the Minkowski metric, which in these coordinates takes the form $ds^2 = -\rr^2 d\theta^2 + d\rr^2$. 
We will treat $\theta$ as a timelike coordinate, and $\rr$ as  spacelike. 

If $u$ solves \eqref{nlw2} and we write $v = u \circ \psi$, then we find that $v$ satisfies
\beq
\e\left( \frac 1 {\rr^2} v_{\theta \theta}  -  v_{\rr \rr}  - \frac 1 {\rr}  v_{\rr} \right)   
+ \frac 1 \e(v^2-1)(2v - \e \kappa) =0,\quad\quad\theta\in \R, \rr>0.
\label{nlwpolar}\eeq
If we imagine that $ v_{\theta \theta} \approx 0$ and  that $\frac 1{\rr} \approx c$ constant, then
this looks like the equation
\beq
\e(-q'' - c q') +  \frac 1 \e (q^2-1)(2q- \e\kappa) \ = \ 0
\label{trwave1}\eeq
for the profile $q$ and wave speed $c$ of  traveling wave solutions of the parabolic counterpart of \eqref{nlw2}. 
This is known to have the $1$-parameter family of solutions
\beq
c = \kappa, \quad q= \tanh(\frac {\rr - r_0} \e),\quad\quad\quad r_0\in \R.
\label{trwave2}\eeq
(We have set things up so that the profile $q$ is independent of the parameter $\kappa$.) 
Note that if we choose $r_0 = \frac 1 \kappa$, then all the nontrivial behavior of 
$ q_\e(r) := \tanh(\frac 1 \e(\rr - \frac 1\kappa))$ is concentrated in an $\epsilon$- neighborhood of 
$r = \frac 1\kappa$, which is consistent with the heuristic  $\frac 1 {\rr}\approx \kappa=c$.

Thus, we will study \eqref{nlwpolar} with initial data such that
\begin{equation}
v( 0, \rr) \approx \tanh(\frac 1 \e(\rr -r_0)), \quad\quad
v_{\theta}( 0,\rr) \approx 0
\quad\quad\mbox{ for  $ \rr>0$}
\label{data.vague}\end{equation} 
where  henceforth we set
\beq\label{choose.r0}
r_0 := \frac 1\kappa.
\eeq
 Indeed, we will show that for data of this form, solutions are
approximately independent of $\theta$, and hence remain concentrated about the curve $r = \frac 1\kappa$.
Recall also that we are assuming \eqref{1ddata1}, which in the new variables implies that 
for $\theta = 0$, 
\begin{equation}\label{1ddata2}
\mbox{
 $(v, v_\theta) = (-1,0)$ for all $r$ close to $0$, 
\quad\quad
 $(v, v_\theta) = (1,0)$ for  $r\ge R$},
 \end{equation}
 and this implies \eqref{1dgbc}, which translates to
\begin{align}
v(\theta, r) = -1 \mbox{ for $r$ near $0$, }
\quad\quad
v(\theta, r) = 1 \mbox{ for  }r\ge Re^{|\theta|}.
\label{rbc}
\end{align}
for every $\theta\in \R$.

\subsection{differential energy inequality}

We next define
\[
e_\e(v) := \frac \e 2 (\frac {v_{\theta}^2}{\rr^2} + v_{\rr}^2) +  \frac 1 {2\e} (v^2-1)^2.
\]
A short computation shows that if $v$ is a sufficiently smooth solution of \eqref{nlwpolar}, then
\beq
\frac d{d\theta}e_\e(v) = \e( v_{\theta} v_{\rr})_{\rr} + \mbox{Term 1} + \mbox {Term 2}
\label{polar.energy}\eeq
where
\[
\mbox{Term 1} = \e \frac {v_{\theta}}{\rr} v_{\rr} (1- \kappa \rr),\quad\quad
\mbox{Term 2} = \kappa \e v_{\theta} ( v_{\rr} -  \frac 1 \e(1-v^2)).
\]
Term 1 should be small in $L^1$ if $v_\theta$ is small and 
$v_{\rr}$ is concentrated near $\rr = \frac 1\kappa$.
Also, the profile $q_\e(\rr) = \tanh(\frac 1\e(\rr-  \frac 1 \kappa))$ satisfies 
$\partial_{\rr}q_{ \e} =  \frac 1 \e(1-q_\e^2) $, so that if $v \approx q_\e$ in a sufficiently strong
sense, then Term 2 should be small.

\begin{remark}
\label{keypoint}
Note that \eqref{nlwpolar} has an {\em exactly} conserved energy: a computation shows that
\[
\frac d{d\theta} \left( \frac \e{2r}v_\theta^2 + \frac {\e r} 2 v_r^2 + \frac 1 \e F_\e(v)
\right)
=  ( r v_r v_\theta)_r, \  \ \mbox{ where }
F_\e'(s) = (s^2-1)(2s- \kappa\e).
\]
It turns out that it is much more useful to work  with the approximately conserved energy $e_\e(v)$
defined above. 
This observation, although very simple, is a key point in our analysis.
\end{remark}

\subsection{lower energy bound}

Note that if $v$ satisfies \eqref{rbc}, 
\begin{align}
\int_0^\infty  \frac \e 2 v_{\rr}^2 +  \frac 1 {2\e} (v^2-1)^2 
 dr 
& \ \ge \ 
\int_0^\infty  |(1-v^2)v_{\rr}|  \nonumber \\
&\ \ge \  
\left| \int_0^\infty  ( v - \frac 13 v^3)_r  dr\right| \ \nonumber\\
&= \  \frac 43 =: c_0.
\label{lbd}\end{align}

\subsection{weighted energy estimates in new variables}

Next, given a solution $v$ of \eqref{nlwpolar}, we 
will write
\beq
\zeta_1(\theta) =\left. \int_0^\infty [1+ (r-r_0)^2] e_\e(v) \ dr \right|_\theta- c_0 .
\label{eta1.def1}\eeq
For the initial data we consider, \eqref{rbc} holds,  and  then \eqref{lbd} implies that
\beq
\zeta_1(\theta) \ge  \zeta_2(\theta) :=  \int_0^\infty \frac \e 2 (\frac {v_\theta}{\rr})^2 + (r-r_0)^2\left.\left( \frac \e 2 v_{\rr}^2 +  \frac 1 {2\e} (v^2-1)^2 \right)  \ dr\right|_\theta .
\label{eta12}\eeq
Using \eqref{polar.energy}, we compute
\begin{align*}
\zeta_1'(\theta)
&= 
\int_0^\infty[1+(r-r_0)^2] \left[ \e( v_{\theta} v_{\rr})_{\rr} + \mbox{Term 1} + \mbox{Term 2}\right] dr.
\end{align*}
Every term in the integrand contains a factor of $v_\theta$, which due to \eqref{rbc} has compact
support in $(0, \infty)$, so the integral clearly exists, and we can integrate by parts without problems.
It also follows from \eqref{rbc} that $1 \le R e^{|\theta|}  \frac 1r$ on the support of $v_\theta$. Thus
\begin{align*}
\int_0^\infty[1+ (r-r_0)^2]\e( v_{\theta} v_{\rr})_{\rr}  & = \ 
-\int_0^\infty 2 \e v_{\theta} (r-r_0) v_{\rr} \\
&\le
R e^{|\theta|} \int_0^\infty 2 \e \frac{| v_{\theta}|}r \, |r-r_0|\, |v_{\rr}|  \le 2 R e^{|\theta|} \zeta_2(\theta).
\end{align*}
Recalling that $\kappa = r_0^{-1}$, 
elementary estimates  yield
\[
|\mbox{Term 1}| \le 
\frac { \e\kappa} 2\left[ (\frac {v_\theta}{\rr})^2 + (r-r_0)^2  v_{\rr}^2\right] , 
\]
and
\[
|\mbox{Term 2}| \le 
\frac { r \e\kappa} 2\left[ (\frac {v_\theta}{\rr})^2 +(v_\rr - \frac 1 \e(1-v^2))^2\right] , 
\]
Repeatedly using \eqref{rbc} to bound $r$, and recalling that $\zeta_2 \le \zeta_1$,
we deduce that
\[
\zeta_1'(\theta) \le C Re^{2|\theta|} \zeta_1(\theta) + CR^3 e^{3|\theta|}\int_0^\infty \frac \e 2 (v_\rr - \frac 1 \e(1-v^2))^2.
\]
However, arguing as in \eqref{lbd},
\begin{align}
\int_0^\infty \frac \e 2 (v_\rr - \frac 1 \e(1-v^2))^2
&=
\int_0^\infty \frac \e 2 v_\rr^2 + \frac 1 \e(1-v^2)^2
-
\int_0^\infty (1-v^2)v_\rr\nonumber \\
&\overset{\eqref{rbc}
}=
\int_0^\infty \frac \e 2 v_\rr^2 + \frac 1 \e(1-v^2)^2
-
c_0\nonumber\\
&\le \zeta_1(\theta).
\label{qest}\end{align}
We conclude that
\begin{equation}
\zeta_1'(\theta) \le C(R+R^3) e^{3|\theta|} \zeta_1(\theta).
\label{1dGr}\end{equation}

\subsection{conclusions about $v$}
At this point, we have proved most of the following proposition.

\begin{proposition}
Let $v$ solve \eqref{nlwpolar} with initial data satisfying 
\eqref{1ddata2}. Let $r_0 = \kappa^{-1}$, and define
$\zeta_1,\zeta_2$ as in  \eqref{eta1.def1}, \eqref{eta12}.
Then there exists a constant $C$, depending on the parameter $R$ in \eqref{1ddata2},
such that for every $\theta\in \R$,
\begin{equation}
\zeta_2(\theta) \le \zeta_1(\theta) \le e^{C(e^{3|\theta|} - 1)} \zeta_1(0) \quad\mbox{ for every }\theta,
\label{eta1est}\end{equation}
As a result, 
\begin{equation}\label{v.L2}
\int_0^\infty \big| v(\theta,r)-v(0,r)\big|^2 \, \frac{dr}{r^2} 
		\le C(\theta,R) \frac { \zeta_1(0)}{\e}, \quad\qquad C(\theta,R) := C |\theta| \int_0^\theta e^{C(R)(e^{3|s|}-1)} \, ds .\end{equation} 
In particular, there exists initial data for which the solution $v$ satisfies
\beq
\int_0^\infty |v(\theta,r) - \sign(r-r_0)|^2 \frac {dr}{r^2 } \le C(\theta,R)\e \quad\mbox{ for all }\theta.
\label{L2forgooddata}\eeq
\label{prop.1dpolar}\end{proposition}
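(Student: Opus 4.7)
The plan is to deduce the three claims of the proposition in sequence from the differential inequality \eqref{1dGr}, which has already been established. The first inequality $\zeta_2(\theta) \le \zeta_1(\theta)$ is built into the definitions: by the lower bound \eqref{lbd}, the weight-$1$ portion of $\int_0^\infty [1+(r-r_0)^2] e_\e(v)\, dr$---namely the $\frac{\e}{2} v_r^2$ and $\frac{1}{2\e}(v^2-1)^2$ terms at weight one---already contributes at least $c_0$, so after subtracting $c_0$ as in \eqref{eta1.def1} only the $\frac{\e}{2}(v_\theta/r)^2$ piece together with the $(r-r_0)^2$-weighted energy density remains, and this is exactly $\zeta_2$. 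The Gronwall bound $\zeta_1(\theta) \le e^{C(e^{3|\theta|}-1)}\zeta_1(0)$ then follows by applying Gronwall's lemma to \eqref{1dGr}, using the elementary evaluation $\int_0^\theta e^{3|s|}\, ds = (e^{3|\theta|}-1)/3$ and absorbing the constant factors into $C$.

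For the $L^2$ bound \eqref{v.L2}, I would write the telescoping identity $v(\theta,r) - v(0,r) = \int_0^\theta v_s(s,r)\, ds$ and apply Cauchy--Schwarz in $s$ to obtain $|v(\theta,r) - v(0,r)|^2 \le |\theta| \int_0^\theta v_s(s,r)^2\, ds$. Multiplying by $r^{-2}$, integrating over $r \in (0,\infty)$, and exchanging the order of integration yields
\[
\int_0^\infty |v(\theta,r) - v(0,r)|^2 \frac{dr}{r^2} \le |\theta| \int_0^\theta \left( \int_0^\infty \frac{v_s(s,r)^2}{r^2}\, dr \right) ds,
\]
whose inner integral is dominated by $\frac{2}{\e}\zeta_2(s) \le \frac{2}{\e}\zeta_1(s)$ directly from the definition of $\zeta_2$. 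Inserting the Gronwall bound from the first step and pulling $\zeta_1(0)/\e$ out of the $s$-integral produces exactly \eqref{v.L2}.

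To produce initial data satisfying the concentrated estimate \eqref{L2forgooddata}, I propose taking $v(0,r)$ to equal $q_\e(r) := \tanh((r-r_0)/\e)$ on a fixed compact neighborhood of $r_0$, smoothly interpolated to $-1$ near $r=0$ and to $+1$ for $r \ge R$ so that \eqref{1ddata2} is met, and setting $v_\theta(0,r) = 0$. Using the ODE $\e q_\e'(r) = 1 - q_\e(r)^2$, the two terms $\frac{\e}{2}v_r^2$ and $\frac{1}{2\e}(v^2-1)^2$ coincide for $v = q_\e$, and $\int \e (q_\e')^2\, dr$ equals $c_0 = 4/3$ up to exponentially small errors from the cutoff regions (which lie at distance bounded below from $r_0$); meanwhile the weighted contribution $\int (r-r_0)^2 e_\e(q_\e)\, dr$ is of order $\e^2$ by the substitution $y = (r-r_0)/\e$ applied to the $\sech^4$ profile. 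Hence $\zeta_1(0) = O(\e^2)$, and the previous step yields $\int |v(\theta)-v(0)|^2 r^{-2}\, dr \le C(\theta,R)\e$. A direct change of variables shows $\int_0^\infty |q_\e(r) - \sign(r-r_0)|^2\, r^{-2}\, dr = O(\e)$, since this integrand is concentrated near $r_0$ where $r^{-2}$ is bounded; combining with $(a+b)^2 \le 2a^2 + 2b^2$ completes \eqref{L2forgooddata}.

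The main obstacle is the construction in the last step: one must simultaneously meet the rigid boundary conditions \eqref{1ddata2} and keep $\zeta_1(0) = O(\e^2)$, which forces the transitions from $q_\e$ to $\pm 1$ to happen in regions strictly separated from $r_0$. This is possible precisely because $r_0 = 1/\kappa$ is a fixed positive interior point, and the exponential decay of $q_\e \mp 1$ away from $r_0$ ensures that the cutoff corrections introduce only exponentially small contributions to $\zeta_1(0)$.
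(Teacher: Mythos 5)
Your proposal is correct and follows essentially the same route as the paper: the Gr\"onwall step from \eqref{1dGr}, the Cauchy--Schwarz/Fubini argument for \eqref{v.L2}, and the construction of initial data by cutting off $\tanh((r-r_0)/\e)$ to get $\zeta_1(0)=O(\e^2)$ are all exactly what the authors do. One small point in your favor: you correctly observe that $\int |v(0,r)-\sign(r-r_0)|^2 r^{-2}\,dr=O(\e)$ rather than $O(\e^2)$, whereas the paper's \eqref{gooddata2} slightly overstates this as $O(\e^2)$; since only $O(\e)$ is needed for \eqref{L2forgooddata}, both arguments close.
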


\begin{remark}\label{r:extra}
For any $\delta>0$,
there exists $C = C(\delta, r_0)$ such that
\beq\label{notsharp}
\int_{0}^\infty |\tanh(\frac{r-r_0}\e)-\sign(r-r_0)|^2\frac {dr}{\max(r,\delta)^2}\le C\e.
\eeq
So \eqref{L2forgooddata} implies that
\beq
\int_{0}^\infty |v(\theta,r) - \tanh(\frac{r-r_0}\e)|^2 \frac {dr}{\max(r,\delta)^2 } \le C\e.
\label{L2gooddata2}\eeq
However, \eqref{notsharp} says precisely that \eqref{L2forgooddata} is not a sharp enough estimate to
determine the profile of $v$,
so it  would arguably be a little misleading to insist on $\tanh$ rather than
$\sign$ in estimates such as  \eqref{L2gooddata2},
\eqref{L2forgooddata}.

On the other hand, standard spectral estimates  
imply that for every $\theta$,  there exists some
$r_\e(\theta)$ such that the solution
$v$ in \eqref{L2forgooddata} satisfies
\[
\int_0^\infty |v(\theta, r) - \tanh (\frac {r-r_\e(\theta)}{\e})|^2 \ \frac{dr}\e \le C \zeta_1(\theta)
\le C(\theta)\e^2, 
\]
and then it follows from \eqref{L2forgooddata} that $|r_\e(\theta)- r_0|\le C \e$.
So although it is not captured in \eqref{L2forgooddata}, 
our estimates
do in fact show that $v$ is close to a scaled, translated hyperbolic tangent.
\end{remark}

\begin{proof}To complete the proof, notice first that  \eqref{eta1est} follows
from \eqref{1dGr} and \eqref{eta12}, via a form of Gr\"onwall's inequality.
Next, for every $r$,
\[
\frac 1 {r^2}\Big( v(\theta,r)-v(0,r)\Big)^2 \ \le \ |\theta| \int_0^\theta  \frac 1 {r^2} v_\theta^2(s,r)  ds . 
\]
We deduce \eqref{v.L2}  by integrating this inequality with respect to $r$
then using Fubini's Theorem and  \eqref{eta1est}.
Finally, to prove \eqref{L2forgooddata}, it now suffices to exhibit initial data $(v, v_\theta)|_{\theta=0} $
satisfying
\eqref{1ddata2}, and such that
\beq
\zeta_1(0) \le C \e^2, \quad\quad \int_0^\infty |v(0,r)-\sign(r-r_0)|^2 \frac {dr}{r^2}\le C \e^2.
\label{gooddata2}\eeq
To do this, let $v_\theta|_{\theta=0} = 0$, and let
\[
v(0,r) := \bar q_{\e, r_0}(r-r_0),
\]
where
\beq\label{barqer}
\bar q_{\e,r_0}(s) = \chi_{r_0}(s) \tanh(\frac s \e) + (1-\chi_{r_0}(s))\sign(s),
\eeq
and for $r>0$ we define $\chi_r\in C^\infty_c(\R)$ to be a  function such that 
\beq
\mbox{$\chi_r(s) = 1$ if $|s|\le \frac r3$ and
$\chi_r(s) = 0$ if $|s|\ge \frac {2r}3$,}
\qquad
\quad\qquad
|\chi_r'|\le C/r
\label{chi.def}\eeq 
It is straightforward to verify
that this initial data satisfies \eqref{gooddata2}.
\end{proof}

\subsection{conclusions about $u$.}

Proposition \ref{prop.1dpolar} yields uniform estimates for $v$ on sets
of the form $\{ (r, \theta) : r>0, |\theta| <\Theta\}$,
corresponding to uniform estimates of the original solution $u$
in a sector $\{ (t,x) : x>0 ,  |t| < x \tanh \Theta\}$. (Recall that $u = v \circ \psi$ for
$\psi$ defined in \eqref{psi.def}.) We next show
infer estimates of $u$ in a 
spacetime slab $(-T,T)\times \R$.

\begin{proposition}\label{1dstandard}
Fix $\e\in (0,1]$ and let $u$ solve 
\eqref{nlw2} with initial 
data $u(0,x) = \bar q_{\e,r_0}(x - r_0)$, $u_t(0,x)=0$,
where $\bar q_{\e,r_0}$ is defined in \eqref{barqer} above and $r_0=\kappa^{-1}$.

For every $T>0$,
there is a constant $C(T)$, independent of $\e$, such that
\beq
\int_{-T}^T \left|u(t,x) -  \sign(x-\gamma(t))
 \right|^2 \ dx\, dt \ \le \  C(T) \e.
\label{u.L2}\eeq
where 
$\gamma(t) = ( r_0^2 + t^2)^{1/2}$. 
\end{proposition}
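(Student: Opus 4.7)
The plan is to reduce the $(t,x)$-integral in \eqref{u.L2} to the polar integral already estimated in Proposition~\ref{prop.1dpolar}, cutting off the exterior by finite propagation speed and pulling back via the chart $\psi$ of \eqref{psi.def} on the portion of the slab contained in $\{x > |t|\}$.

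First I would invoke finite propagation speed for \eqref{nlw2}. Since $\bar q_{\e,r_0}(\cdot - r_0)$ equals $-1$ on $(-\infty, r_0/3]$ and $+1$ on $[5r_0/3,\infty)$ and $u_t(0,\cdot)\equiv 0$, this yields $u(t,x) = -1$ on $\{x + |t| \le r_0/3\}$ and $u(t,x) = +1$ on $\{x - |t| \ge 5r_0/3\}$. Because $r_0 \le \gamma(t) \le r_0 + |t|$, $\sign(x - \gamma(t))$ agrees with $u$ on each of these two sets, so the integrand in \eqref{u.L2} is supported in the bounded parallelogram
\[
\Sigma_T := \{(t,x) : |t| \le T,\ r_0/3 - |t| < x < 5r_0/3 + |t|\}.
\]

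Next, on $\Sigma_T^+ := \Sigma_T \cap \{x > |t|\}$ I would pull back via $\psi$. Using $dt\,dx = r\, d\theta\, dr$, the identity $\sign(x - \gamma(t)) = \sign(r - r_0)$ (which follows from $x^2 - \gamma(t)^2 = r^2 - r_0^2$ together with $x > 0$), and the bound $r \le x \le 5r_0/3 + T =: M$ on $\Sigma_T^+$, the integral transforms as
\[
\int\!\int_{\Sigma_T^+} |u - \sign(x-\gamma)|^2\, dt\, dx \ \le \ M^3 \int\!\int_{\psi^{-1}(\Sigma_T^+)} |v - \sign(r-r_0)|^2\, \frac{dr}{r^2}\, d\theta .
\]
By the polar analogue of finite propagation in \eqref{rbc}, $v - \sign(r-r_0)$ is supported in the $\theta$-dependent band $\{(r_0/3)e^{-|\theta|} < r < (5r_0/3)e^{|\theta|}\}$, and the slab condition reads $r\sinh|\theta| \le T$. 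Restricting to $|\theta| \le \Theta_0$ for some $\Theta_0 = \Theta_0(T,r_0)$ and applying \eqref{L2forgooddata} pointwise in $\theta$, I can bound this portion by $C(T)\e$ after integration in $\theta$.

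The main obstacle is to control the remaining contributions, namely the wedge $\Sigma_T^- := \Sigma_T \setminus \Sigma_T^+$ together with the $|\theta| > \Theta_0$ tail of $\psi^{-1}(\Sigma_T^+)$; both are nonempty once $T > r_0/6$, and on both $\gamma(t) > |t| \ge x$ forces $\sign(x-\gamma) \equiv -1$, so that the integrand collapses to $|u+1|^2$. My plan is to cover this region by a second polar chart $\tilde\psi(\tilde\theta,\tilde r) := (\tilde r\sinh\tilde\theta,\ -\delta + \tilde r\cosh\tilde\theta)$ centered at $(0,-\delta)$ for $\delta = \delta(T,r_0) > 0$ large enough, and to repeat the weighted-energy argument of Proposition~\ref{prop.1dpolar} in the shifted coordinates. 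The shift introduces additional terms of order $\kappa\delta$ in the analogue of \eqref{polar.energy}, arising from the mismatch between the actual wave speed $\kappa = 1/r_0$ and the natural speed $1/(r_0+\delta)$ adapted to $\tilde\psi$; these are absorbed by Gr\"onwall at the cost of enlarging the constant $C(T)$. Combining this bound with the one from $\Sigma_T^+ \cap \{|\theta| \le \Theta_0\}$ and using the vanishing of the integrand off $\Sigma_T$ completes the proof of \eqref{u.L2}.
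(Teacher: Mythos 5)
The reduction to $\Sigma_T^+\cap\{|\theta|\le\Theta_0\}$ via the chart $\psi$ and the estimate \eqref{L2forgooddata} is fine, and agrees with Step 2 of the paper's proof. The gap is in the treatment of the leftover region (the wedge $\Sigma_T^-$ and the large-$|\theta|$ tail) via a second Minkowski-polar chart $\tilde\psi$ centered at $(0,-\delta)$.

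The weighted-energy argument of Proposition \ref{prop.1dpolar} hinges on the coincidence $r_0=\kappa^{-1}$: the factor $(1-\kappa r)$ in Term 1 of \eqref{polar.energy} vanishes exactly on the hyperbola $r=r_0$ where the interface and the energy density are concentrated, and is therefore dominated by the $(r-r_0)^2$-weighted terms in $\zeta_2$. In the shifted chart, the actual interface $x=\gamma(t)$ is \emph{not} a coordinate line $\tilde r=\mathrm{const}$: one computes $\tilde r^2=(x+\delta)^2-t^2=r_0^2+2\delta\gamma(t)+\delta^2$, which depends on $t$. Consequently there is no choice of center $\tilde r_0$ making the analogue of Proposition \ref{prop.1dpolar} run. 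If you put $\tilde r_0=r_0+\delta$ (so the weight vanishes at the interface at $\tilde\theta=0$), then $(1-\kappa\tilde r)$ is of size $\kappa\delta$, bounded away from zero near the interface, and the resulting term $\e\kappa\delta\int|v_{\tilde\theta}v_{\tilde r}|/\tilde r$ costs $O(\delta)$ per unit $\tilde\theta$ (since $\int\frac\e2 v_{\tilde r}^2\approx c_0$), not $O(\zeta_1)$; the Gr\"onwall inequality becomes inhomogeneous with an $O(1)$ forcing term, and $\zeta_1$ does not stay of order $\e^2$. If instead you put $\tilde r_0=\kappa^{-1}=r_0$, then $\zeta_1(0)$ is already of size $\delta^2 c_0=O(1)$ because the weight does not vanish at the interface. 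Either way, the shifted-chart weighted energy is not $O(\e^2)$, so the contribution of the problematic region to \eqref{u.L2} is not controlled by this route. This is not a matter of enlarging $C(T)$.

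The fix the paper uses is to not work in a second polar chart at all, and instead to propagate smallness of the \emph{unweighted} energy of $u$ in the original $(t,x)$ coordinates to the left of the interface. Concretely, with $x_0(t)=\sqrt{t^2+r_0^2/9}$ and $x_1(t)=\sqrt{t^2+4r_0^2/9}$ (the images under $\psi$ of the coordinate lines $r=r_0/3$ and $r=2r_0/3$), one takes a cutoff $\chi$ supported in $\{x<x_1(t)\}$ and equal to $1$ on $\{x<x_0(t)\}$, and runs a standard Gr\"onwall energy estimate \eqref{est_eu} for $\int\chi e_\e^o(u)$. The forcing term is the energy in the band $\{x_0(t)<x<x_1(t)\}$, which lives entirely inside the sector covered by the polar chart with $r\in(r_0/3,2r_0/3)$ bounded away from $r_0$; there the weighted bound $\zeta_2\le\zeta_1=O(\e^2)$ gives $O(\e^2)$, and the initial energy $\int\chi e_\e^o(u)|_{t=0}$ is $O(\e^2)$ from \eqref{1ddata1}. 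This yields $\int\chi e_\e^o(u)|_t\le C\e^2$ for $|t|\le T$, which, via a fundamental-theorem-of-calculus argument (Step 2), gives the $L^2$ bound for $|u+1|^2$ on $\{x<x_0(t)\}$. The essential difference from your plan is that the polar estimates are used only as a \emph{boundary} input to a standard energy propagation, rather than attempting a second global weighted-energy argument in an ill-adapted chart.
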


\begin{remark}
Although we have stated here only an analog of \eqref{L2forgooddata},
our arguments also establish an analog 
of \eqref{eta1est}, i.e., energy estimates 
showing that, for
a large class of initial data,  
energy concentrates
near the curve $(t,\gamma(t))$. We already know this, modulo  
a change of variables, in the sector
$\{ (t,x) : x>0, |t|\le  x\tanh \Theta\}$ so the new point is
energy estimates outside this sector,
which are established in 
\eqref{u.energy1} below.
\end{remark}

\begin{proof}
Fix $T>0$ and let $\Theta$ be such that $T = \frac{r_0}{3} \sinh \Theta$, i.e., $\Theta = \sinh^{-1} \frac{3T}{r_0}$. We will start by considering a solution $u$ for initial data satisfying
\beq
(u, u_t)(0,x) = (-1, 0) \mbox{ for all $x < \delta$,}\qquad
(u, u_t)(0,x) = (1, 0) \mbox{ for all $x  > R$,}
\label{udata}\eeq
for some $0<\delta < r_0 < R$.
We will only specialize later to the initial data in the statement of
the Proposition. Until we do so,
all constants in our argument may depend on $r_0 = \frac 1\kappa$, $\Theta$ (hence on $T$), and the parameter $R$ above.

The choice \eqref{udata} of initial data implies that Proposition \ref{prop.1dpolar} 
applies to $v = u \circ  \psi^{-1}$.

We will write 
\[
x_0(t) :=  \sqrt{t^2+ \frac 19 r_0^2}, \qquad
x_1(t) :=  \sqrt{t^2+ \frac 49 r_0^2}.
\]
The idea is simply that  results from Proposition \ref{prop.1dpolar} 
imply that $u \approx -1$ (with respect to the $H^1$ norm) in the 
set $\{(t,x) : x_0(t) < x < x_1(t) \}$, see the shaded region in Figure \ref{fig:1}.
We combine this with the fact  that  $(u,u_t)|_{t=0} \approx (-1,0)$ 
to argue that $u \approx  -1$ in $H^1$ in the entire
set $\{ (t,x) : |t|< T, x  < x_1(t) \}$.  

\begin{figure}[h]
\centering
\includegraphics*[width=300pt]{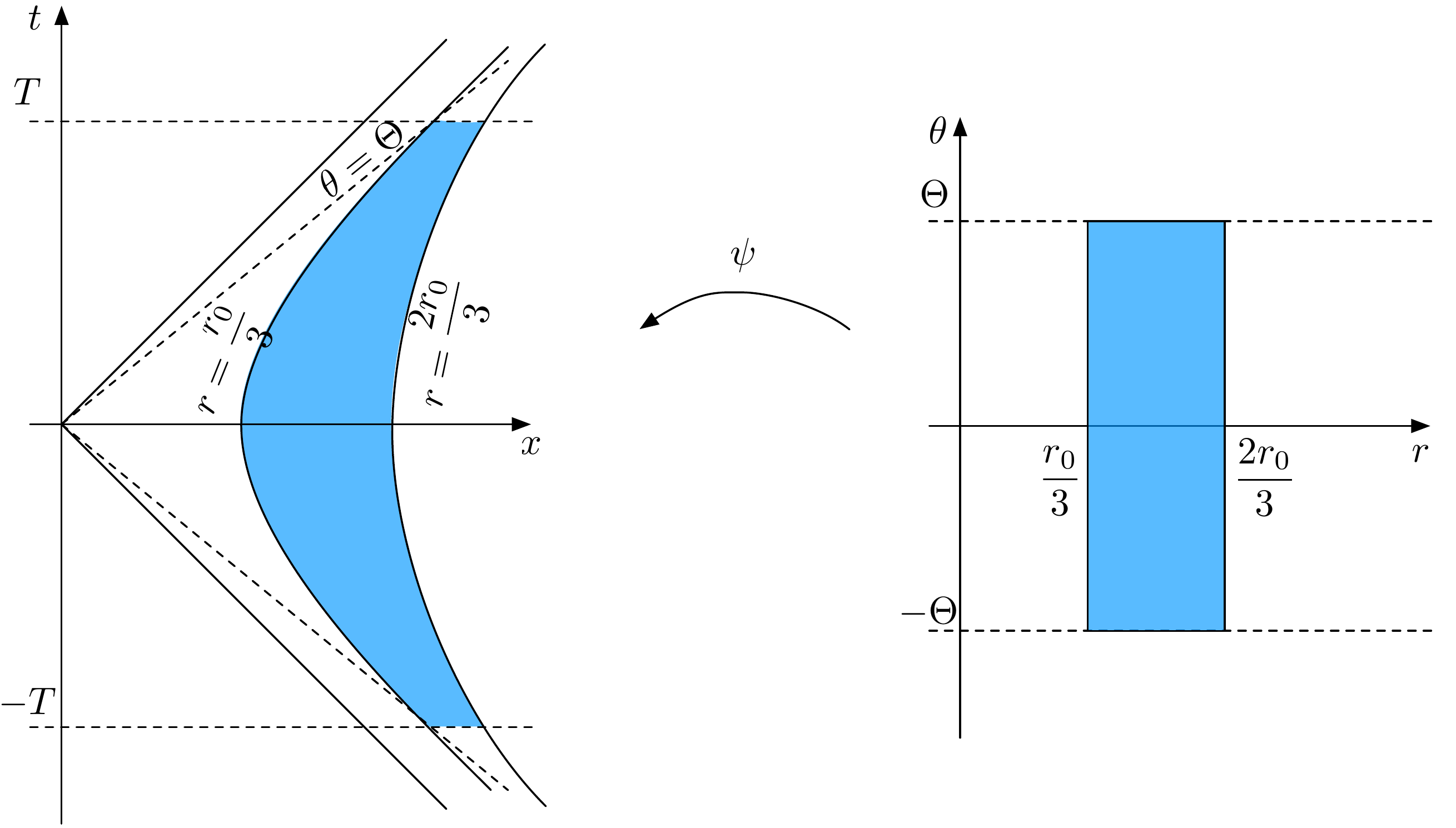}
\caption{}
\label{fig:1}
\end{figure}

{\bf Step 1}. Indeed, let $\chi$ be a smooth  function such that $0\le \chi \le 1$ and
\begin{align*}
\chi(x,t)  = 1 \mbox{ if }  x \le x_0(t),
\qquad  
\chi(x,t) & = 0 \mbox{ if }  x \ge x_1(t) .
\end{align*}
We will  write $e_\e^o(u)$ for an energy density in the original coordinates
defined by
$$
e^o_\e(u) = \frac\e2\big( u_t^2 + u_x^2 \big) + \frac1{2\e} (1-u^2)^2.
$$
Then rather standard energy arguments, which we recall in Step 3 below,
imply that
\begin{align}
\int \chi e^o_\e(u) \bigg|_t
	& \leq  e^{\kappa t} \int \chi e^o_\e(u) \bigg|_0 + C_1 \int_0^t  \int_{x_0(t)}^{x_1(t)}
	e^{\kappa(t-s)} e^o_\e(u) \, dx\, ds.
\label{est_eu}
\end{align}
It is straightforward to check that there exists some $C$ such that
$e^0_\e(u)\circ \psi \le C e_\e(v)$ in $[-\Theta, \Theta] \times [\frac 12 r_0,. \frac 23 r_0]$.
On the same set, the Jacobian determinant  $\det (D\psi)$  is bounded 
and 
\[
\frac 1C e_\e(v) \le  \frac \e 2 (\frac {v_\theta}{\rr})^2 + (r-r_0)^2\left( \frac \e 2 v_{\rr}^2 +  \frac 1 {2\e} (v^2-1)^2 \right).
 \]
Hence by a change of variables,
\[
\int_0^t  \int_{x_0(t)}^{x_1(t)} e^o_\e(u) \, dx\, ds 
\le
C\int_0^\Theta \int_{r_0/3}^{2r_0/3}  e_\e(v) \ dr \ d\theta
\le 
C\int_0^\Theta \zeta_2(\theta) d\theta.
\]
We combine this with \eqref{est_eu} and using  \eqref{eta1est}, and note
that $\int\chi(t,x) e^0_\e(u)(0,x)\ dx \le C \zeta_1(0)$, as a result
of \eqref{1ddata1}. These computations lead to the inequality
\beq
\int_{\{ (t,x) : |t|\le T,\, x<x_0(t) \} } e_\e^o(u) \ dx\,dt \ \le C\zeta_1(0).
\label{u.energy1}\eeq

{\bf Step 2}. 
We now use the above estimates to prove \eqref{u.L2}
for $u$ solving \eqref{nlw2} with initial data satisfying 
\eqref{udata} together with
\beq
\zeta_1(0) \le C\e^2 \ \  \mbox{ for }v = u\circ \psi^{-1},
\qquad
\int_R |u(0,x) - \sign(x-r_0)|^2 \le C \e.
\label{udata2}\eeq
In particular, these conditions are satisfied by the specific data described in the
statement of the proposition, exactly as in the proof of Proposition \ref{prop.1dpolar}.
We will write 
$Z(t,y) := (t, y + x_0(t))$. 
Then for  for every $y$, 
\[
u(Z(t,y)) - u(Z(0,y)) = \int_0^t u_t(Z(s,y)) + x_0'(s)u_x(Z(s,y)) \ ds \le    C \left( t \int_0^t  \frac{e_\e^o(u)(Z(s,y)) } \e\ ds\right)^{1/2}.
\]
Also, $\{ (t,x) : |t|\le T,\, x<x_0(t) \} = \{ Z(t,y) :|t|\le T, y\le 0 \}$, 
so by squaring the above inequality, integrating 
from $y=-\infty$ to $y=0$,  integrating in $t$,
changing variables, and using \eqref{u.energy1} and \eqref{udata2}, we find that
\beq
\int_{-T}^T\int_{-\infty}^{x_0(t)} |u(t,x) + 1|^2 \, dx \ dt \ \le \  \frac  C \e \zeta_1(0) \le C \e.
\label{u.L2a}\eeq

Next, recall that $u\circ \psi = v$, 
so by a change of variables (see \eqref{rbc}) and \eqref{L2forgooddata},
\[
\int_{-T}^T\int_{x_0(t)}^{R+|t|} |u - \sign(x-\gamma(t))|^2 
\le 
C \int_{-\Theta}^\Theta \int_{r_0/3}^{R e^{|\theta|}} | v(\theta, r)- \sign(r-r_0)|^2\, dr\, d\theta\  \le C \e.
\]
Recalling that $u(t,x) \equiv 1$ for $x \ge R+|t|$, we deduce that if \eqref{udata}, \eqref{udata2}
hold then
\beq
\int_{-T}^T\int_\R  |u (t,x) - \sign(x-\gamma(t))|^2  \le C \e.
\label{tildeU.L2}\eeq

{\bf Step 3: proof of \eqref{est_eu}:}
A short calculation shows that 
\begin{equation}
\frac{d}{dt} e^o_\e(u) = \e (u_tu_x)_x + \kappa u_t (1-u^2).
\label{eoriginal}\end{equation}
Then it follows from \eqref{eoriginal} that 
\[
\frac{d}{dt} \int \chi e^o_e(u)
	 = \int \chi_t e^o_\e + \e \int \chi (u_t u_x)_x + \kappa \int \chi u_t (1-u^2).
\] 
We integrate by parts and use some elementary inequalities to find
\[
\frac{d}{dt} \int \chi e^o_e(u)
	 \leq \int ( |\chi_t|+ |\chi_x|) e^o_\e + \kappa\int \chi e^o_\e .
\] 
This implies that
$$
\frac{d}{dt} \left( e^{-\kappa t} \int \chi e^o_\e(u) \right)
	\leq e^{-\kappa t} \int \big( |\chi_t| + |\chi_x| \big) e^o_\e(u)
	\leq C_1 e^{-\kappa t} \int_{{\rm supp}\, \chi_x} e^o_\e(u),
$$
for $|t|\le T$, where we may take $C_1 = \frac{C(1+T)}{r_0^2}$.
We arrive at \eqref{est_eu} by integrating this expression from $0$ to $t$.

\end{proof}

\section{statement of main  theorem}\label{S:statement}

In this section we state our main theorem, which relates a semilinear
wave equation to a hypersurface of prescribed mean curvature on a
Lorentzian manifold. We first introduce these ingredients.

We will always use greek letters such as  $\alpha, \beta,\ldots$ to denote indices
that run between $0$ and $n$, and we implicitly sum repeated greek indices from $0$ to $n$.
We will explicitly indicate sums that run over different ranges,
we will  generally not use greek letters for indices 
that belong to some proper subset of $\{0,\ldots, n\}$.

\subsection{the Lorentzian manifold}\label{ss:tlm}

We consider a manifold $N$
%
that we assume to be homeomorphic to $\R^{1+n}$, $n\ge 2$, and we
fix global coordinates $(x^0,\ldots, x^n)$. 
We will write $\bf h$ to denote a Lorentzian inner product on $N$, and we let
$(h_{\alpha\beta} )$
denote the components of the metric tensor with respect to the given coordinates.
We also write
\[
(h^{\alpha\beta}) = (h_{\alpha\beta})^{-1}  ,\qquad\qquad h = \det (h_{\alpha\beta}).
\]
We will assume that $(h_{\alpha\beta})$ is smooth and that  there exists some constant $c_1>0$ such that
\beq
h_{00} \le -c_1, \quad\qquad \sum_{i,j=1}^n h_{ij} \xi^i \xi^j \ge c_1 |\xi|^2,
\qquad\quad
| h_{\alpha\beta}| \le  c_1^{-1}\mbox{ for all }\alpha,\beta.
\label{lorentzian}\eeq
everywhere in $N$. Thus $x^0$ may be thought of as a time coordinate, and we
will sometimes write $t$ for $x^0$. We will further assume that
\beq
h_{0i} = 0,\qquad i=1,\ldots, n.
\label{hsplit}\eeq
Then it is clear that $0 < c \le -h \le C$ everywhere in $N$.
For $0 \le t \le  T <\infty$, we will use the notation 
\[
\Sigma_t := \{ x\in N : x^0 = t\},\qquad\quad N_T :=
\{ x\in N : |x^0|< T \}.
\]

In view of \eqref{lorentzian}, we can obtain a Riemanian metric from ${\bf h}$ by changing the
sign of $h_{00}$. Since this metric is uniformly
equivalent to the Euclidean metric, and the associated
volume element  is uniformly comparable
to the Euclidean volume element on $\R^{1+n}$,
we will for simplicity use the
Euclidean structure on $\R^{1+n}$ to define
$L^p$ and Sobolev norms on $N$.

\subsection{ the semilinear wave equation}\label{ss:slw}
Let $\kappa:N\to \R$ be a fixed smooth function. 
We will consider the equation
\beq
\label{nlwN}
\e \Box_{\bf h} u  + \frac 1 \e f_0(u)  +\kappa f_1(u) = 0, \qquad u : N\to \R
\eeq
where 
\[
\Box_{\bf h} u :=  \frac {-1}{\sqrt{-h}} \partial_{x^\alpha}\left( {\sqrt{-h}}h^{\alpha\beta}\partial_{x^\beta} u \right).
\] 
It follows from \eqref{lorentzian} that \eqref{nlwN} is a hyperbolic equation.
We always assume that  the nonlinearities
$f_0,f_1$ in  \eqref{nlwN} have the form
\beq\label{f0f1}
f_0 = F', \qquad
f_1 = \begin{cases}
\sqrt{ 2F} &\mbox{ in }[-1,1]\\
-\sqrt{ 2F} &\mbox{ elsewhere }.
\end{cases}
\eeq
for $F:\R\to \R$ a smooth function such that 
\beq
F(x) > 0 \mbox{ if }|x|\ne 1,\qquad
c(1-|x|)^2 \le F(x) \le C(1-|x|)^2 \mbox{ if }|x|\le 2.
\label{doublewell}\eeq
Note that $f_0, f_1$ are smooth as a consequence of \eqref{doublewell} and the
smoothness of $F$.

We do not address questions about well-posedness of \eqref{nlwN}.
When $(N, {\bf h})$ is flat Minkowski space $\R^{1+n}$ then
global well-posedness in the energy space can be guaranteed
by imposing suitable growth conditions on $F$. In
particular, if $n\le 4$ then \eqref{nlwN} on Minkowski space is
globally well-posed for 
$f_0(u) = 2(u^2-1)u$ and $f_1(u) = 1- u^2$, associated
to the potential $F(u) = \frac 12(1-u^2)^2$.

The form of the nonlinearity is further discussed in Section \ref{ss:nonlin}, 
where we show that the assumptions \eqref{f0f1} are
not actually restrictive if $\kappa$ is constant.
Note also that one can easily generate nonlinearities satisfying the above conditions by
starting from $f_1$ such that $\sign f_1(s) = \sign(1-s^2)$, $|f_1'(\pm 1)| \ne 0$,
then defining $F = \frac 12 f_1^2$ and $f_0 = F' = f_1 f_1'$.

\subsection{ the hypersurface of prescribed mean curvature}\label{ss:hpmc}

We assume that $I$ is a bounded open subset of $N_{T^*}$, for some $T^*>0$, such that $\Gamma := \partial I \cap N_{T^*}$ is a smooth embedded timelike hypersurface satisfying
the prescribed mean curvature\footnote{Our sign conventions for the unit normal, and hence the mean curvature, are described in 
Section \ref{ss:mc}, where we also review some basic properties of mean curvature.
These sign conventions are such that the curve around which the solution in
Section $2$ concentrates, with the orientation we have implicitly chosen there, in fact  has ``mean curvature" equal to $-\kappa$ rather than $\kappa$.}
 equation
\beq
-\kappa(x) =  \mbox{mean curvature  in $(N, {\bf h})$ of $\Gamma$ at $x$}
\label{pmcm}\eeq
and that
\beq
\mbox{$\Gamma$ is orthogonal to the initial hypersurface $\Sigma_0$. }
\label{zerovelocity}\eeq
This means that $\Gamma$ has zero initial velocity with respect to the
initial hypersurface. 
See \eqref{Psi1}, \eqref{zv2} below for  a precise formulation.

For smooth data and smooth $\kappa$, such as we consider here, local existence of smooth embedded submanifolds $\Gamma\subset N$ satisfying
\eqref{pmcm}, \eqref{zerovelocity}
follows from arguments  in Milbredt \cite{mil}.

\begin{remark}
In fact  Milbredt \cite{mil} studies a rather general Cauchy problem for
the $\kappa = 0$ case of \eqref{pmcm} on a larger class of Lorentzian manifolds than
we consider here. Modifying his arguments to extend to the case
of smooth $\kappa$ presents no difficulty. His basic existence results are 
ultimately proved by
solving \eqref{pmcm} in coordinate charts (with a suitable choice of gauge)
and then piecing together these local solutions, using finite propagation 
speed for the equation. Solvability in coordinate charts depends on
local solvability results for the Cauchy problem for
a general quasilinear hyperbolic equation, of the form
\[
g^{\alpha\beta}(\Psi, D\Psi) \partial_{\alpha} \partial_{\beta}\Psi = f(\Psi, D\Psi),
\]
where
$g^{00}\le - \lambda$ and $g^{ij} \ge \mu \delta^{ij}$.
Changing the equation to allow  nonzero $\kappa$
simply adds some additional smooth lower-order terms on the right-hand
side, and  the existence results used in \cite{mil} apply with no change
to the equation once it is modified in this way. Other aspects of the argument,
such as piecing together local solutions, are similarly uneffected.
\label{rem:milbredt}\end{remark}

\subsection{main theorem}

The main result of this paper is the following. The statement uses terminology and notation introduced
in Sections \ref{ss:tlm} - \ref{ss:hpmc}, and
for any set $U$, we will write
\[
\mbox{sign}_U(x) := \begin{cases}
1&\mbox{ if }x\in U,\\
-1&\mbox{ if not}.
\end{cases}
\]

\begin{theorem}\label{T.mink}
Assume that $T_0 < T^*$. 

Then there exists 
a neighborhood $N'$ of $\Gamma \cap N_{T_0}$
and a unique smooth function $d_\Gamma:N'\to \R$ such that
\beq\label{dGamma}
d_\Gamma(x) = 0 \mbox{ on }\Gamma,\qquad
h^{\alpha \beta} \, \partial_{x^\alpha} d_\Gamma \, \partial_{x^\beta} d_\Gamma = 1, \qquad d_\Gamma>0 \mbox{ in }I\cap N'.
\eeq
Moreover, $d_\Gamma$ is bounded away from $0$ outside of every
neighbourhood of $\Gamma$.

In addition, for every $\e\in (0,1]$, there exists  smooth 
initial data $(u_0,u_1)\in \dot H^1\times L^2 (\Sigma_0)$
such that, if $u$ is a smooth solution of \eqref{nlwN} with $(u, \partial_{x^0}u) = (u_0,u_1)$ on $\Sigma_0$,
then
\beq\label{L2est}
\int_{N_{T_0}} |u - \mbox{sign}_I |^2 \ \le \ C \e,
\eeq
and
\beq
\int_{N_{T_0}} \left[ d_\Gamma^2\, \chara_{N'}  +  \chara_{N_{T_0}\setminus N'}\right]
\left(\e |Du|^2 + \frac 1 \e F(u) \right) \ 
\le \  C \e^2.
\label{weightedenergy}\eeq
Here $C$ is a constant that depends on ${\bf h}, F, \Gamma$ but is independent
of $\e$.
\end{theorem}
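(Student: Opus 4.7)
My plan is to follow the strategy of Section \ref{S:simple} almost verbatim, with the Minkowskian polar coordinates $(\theta,r)$ replaced by a Fermi-type coordinate system $(y^0,\ldots,y^{n-1},d)$ adapted to $\Gamma$, where $d = d_\Gamma$ is the Lorentzian signed distance. To construct $d_\Gamma$, I would shoot $\mathbf{h}$-geodesics normal to $\Gamma$ with unit spacelike initial velocity; since $\Gamma$ is smooth, embedded and timelike, these cover a tubular neighborhood $N'$ of $\Gamma \cap N_{T_0}$ diffeomorphically (shrinking $T_0 < T^*$ if necessary), and $d_\Gamma$, defined as signed arc-length, is smooth on $N'$, vanishes on $\Gamma$, and satisfies the eikonal equation \eqref{dGamma} by construction. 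The assumption \eqref{zerovelocity} guarantees that the level sets of $y^0$ in these coordinates match $\Sigma_{y^0}$ near $\Gamma \cap \Sigma_0$. The key geometric input is that $\Box_\mathbf{h} d_\Gamma$ equals $-\kappa$ on $\Gamma$ (this is exactly \eqref{pmcm} applied to the level-set representation of $\Gamma$, with the sign convention of the footnote), and more generally $\Box_\mathbf{h} d_\Gamma = -\kappa + O(d_\Gamma)$ throughout $N'$, because the parallel hypersurfaces $\{d_\Gamma = s\}$ have mean curvature $-\kappa + O(s)$. I would then extend $d_\Gamma^2$ smoothly to all of $N_{T_0}$ so that the weight in \eqref{weightedenergy} is globally smooth and bounded below outside any given neighborhood of $\Gamma$.

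For the initial data, I would set $u_1 = 0$ and, near $\Gamma \cap \Sigma_0$, take $u_0(x) = \bar{q}_\e(d_\Gamma(x))$ for a cutoff-regularized hyperbolic tangent profile, exactly as in \eqref{barqer}, smoothly extended to $\mathrm{sign}_I$ away from $\Gamma \cap \Sigma_0$. The heart of the proof is the analog of the weighted energy estimate \eqref{eta1.def1}--\eqref{1dGr}. I would introduce
\[
\zeta_1(y^0) := \int_{\Sigma_{y^0}} \!\!\left[ d_\Gamma^2 \chara_{N'} + \chara_{N_{T_0}\setminus N'}\right] e_\e(u) \, d\mathrm{vol}_\mathbf{h} - c_0 \,\mathcal{H}^{n-1}(\Gamma \cap \Sigma_{y^0}),
\]
with $e_\e(u)$ the \emph{unperturbed} energy density built from $F$ alone, not the conserved energy of \eqref{nlwN}. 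This is the point emphasized in Remark \ref{keypoint}: $e_\e$ produces a clean differential identity in which the genuinely dangerous $\frac1\e f_0(u)$ term cancels exactly, while the $\kappa f_1(u)$ term, after an integration by parts, takes the form of a ``Term 1'' (involving $\Box_\mathbf{h} d_\Gamma + \kappa = O(d_\Gamma)$, absorbed by the $d_\Gamma^2$ weight) plus a ``Term 2'' (of the form $\kappa \partial_{y^0} u \bigl(\partial_d u - \tfrac1\e \sqrt{2F(u)}\bigr)$), exactly as in the calculation preceding \eqref{polar.energy}. Slicewise application of the Modica-type inequality $\tfrac\e2 |\partial_d u|^2 + \tfrac1\e F(u) \ge \sqrt{2F(u)}\,|\partial_d u|$ together with the boundary values $u = \pm 1$ yields $\zeta_1 \ge \zeta_2 \ge 0$, where $\zeta_2$ contains the genuinely quadratic control of $\partial_{y^0} u$ and of $d_\Gamma^2 \cdot e_\e(u)$; the same lower bound handles the ``Term 2'' remainder, as in \eqref{qest}.

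With these pieces in hand, the differential inequality $\zeta_1'(y^0) \le C \zeta_1(y^0)$ and Grönwall give $\zeta_1(y^0) \le e^{CT_0} \zeta_1(0) = O(\e^2)$ uniformly for $|y^0| \le T_0$, by the choice of initial data (which makes $\zeta_1(0) = O(\e^2)$ by a direct computation as in the last paragraph of the proof of Proposition \ref{prop.1dpolar}). To extend this into \eqref{weightedenergy} on all of $N_{T_0}$, I would couple it, following Section \ref{S:pot3.2} and Step 1 of Proposition \ref{1dstandard}, with standard energy estimates for \eqref{nlwN} run against cutoffs $\chi$ supported in the region where the data is already close to $\pm 1$: the forcing $\int_{\mathrm{supp}\,\nabla\chi} e_\e^o(u)$ is bounded by $\zeta_2 = O(\e^2)$ via the $d_\Gamma^2$ weight, and Grönwall in the ``outer'' region propagates this smallness. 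Finally \eqref{L2est} follows exactly as in Step 2 of Proposition \ref{1dstandard}: integrate $\partial_{y^0} u$ along the $y^0$-lines, square, and use the energy bound.

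The main obstacle I anticipate is the verification of the geometric error terms in the adapted coordinates, i.e.\ showing that all discrepancies between the equation satisfied by $u$ and the equation satisfied by the ansatz $\tanh(d_\Gamma/\e)$ come with an extra factor of $d_\Gamma$ (so that the weight $d_\Gamma^2$ absorbs them) and with constants uniform in $\e$; this rests on the identity $\Box_\mathbf{h} d_\Gamma + \kappa = O(d_\Gamma)$ and on a careful change-of-variables computation for $e_\e$. A secondary difficulty is closing the iterative step: the adapted coordinates only make sense in $N'$, and one has to glue the weighted estimate in $N'$ to the outer standard estimate repeatedly over a finite number of time slabs whose length depends on the tubular radius of $N'$, which is the purpose of the iterative argument advertised for Section \ref{S:pot3.2}.
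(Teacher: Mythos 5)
Your outline follows the paper's own strategy—Fermi/Gaussian-normal coordinates adapted to $\Gamma$, a weighted energy $\zeta_1$ built from the \emph{unperturbed} energy density (Remark \ref{keypoint}), a slicewise Modica lower bound giving $\zeta_1\ge 0$ and controlling the equipartition defect, Gr\"onwall, and an inner/outer iteration. But there are two genuine gaps in the execution as sketched.

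First, the slicewise lower bound. You invoke the Modica inequality $\tfrac\e2|\partial_d u|^2+\tfrac1\e F(u)\ge\sqrt{2F(u)}\,|\partial_d u|$ ``together with the boundary values $u=\pm1$.'' In Section \ref{S:simple} the exact boundary conditions \eqref{rbc} come for free from finite propagation speed in $1$D; in the general setting there is nothing that forces $v$ to equal $\pm1$ at the boundary of the tubular neighbourhood $\{|y^n|=\rho\}$. Consequently the slicewise inequality gives $\int e_{\e,\nu}\ge |Q(v(\rho))-Q(v(-\rho))|$, which is \emph{not} $c_0$ unless $v(\pm\rho)\approx\pm1$, and this must be \emph{proved}, not assumed. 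The same issue affects the ``Term 2'' bound in \eqref{qest}: without approximate boundary values, $\int(\sqrt\e v_{y^n}-f_1(v)/\sqrt\e)^2$ is not controlled by the energy excess. The paper's resolution is Lemma \ref{L.1}: under a smallness hypothesis on the weighted $L^2$ distance $\int|y^n|\,|v-\sign(y^n)|^2$ (which is the paper's $\zeta_2$, a different quantity from what you call $\zeta_2$) and on the slicewise energy excess, one recovers the lower bound and the equipartition estimate up to $O(e^{-c/\e})$ error; then a Chebyshev/``good--bad points'' argument (Step 2 of Proposition \ref{localest}) controls the contribution of the slices where this fails. This is a genuine missing ingredient, not a detail.

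Second, flux control at the tube boundary. Your $\zeta_1$ is a single integral over $\Sigma_{y^0}$ with the weight $d_\Gamma^2\chara_{N'}+\chara_{N_{T_0}\setminus N'}$, but differentiating it produces a boundary/flux term at $\partial N'$ of the form $\e\int|\vp^n|$ which has no small factor and is not absorbed by $\zeta_2,\zeta_3$. The paper handles this by shrinking the tube in $y^0$ (the radius $\rho(s)=\rho-c_5 s$ in Proposition \ref{localest}), so that the flux term is dominated by the $-c_5 I_2$ produced by differentiating the domain; the outer region is then handled separately by standard Gr\"onwall energy estimates in the $x$-variables, and the two are glued iteratively. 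Related to this, your remark that ``the level sets of $y^0$ match $\Sigma_{y^0}$ near $\Gamma\cap\Sigma_0$'' is only correct at $y^0=0$ (this is \eqref{Phi_b}); for $y^0\ne0$ the $y^0$-slices and $\Sigma_t$ differ, which is exactly why the gluing step needs the careful choice of $t_1$ in \eqref{t1.choice}. These points are closer to the ``secondary difficulty'' you flagged, but with your chosen $\zeta_1$ the inner Gr\"onwall inequality itself does not close without the shrinking radius, so it is more than bookkeeping.

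Minor points: using the Lorentzian volume form $d\mathrm{vol}_{\bf h}$ and subtracting the $y^0$-dependent $c_0\,\Hdf^{n-1}(\Gamma\cap\Sigma_{y^0})$ introduces extra $\partial_{y^0}\sqrt{-g}$ and area-variation terms; the paper avoids them by fixing a product volume form $d(vol)$ with $\partial_{y^0}\omega=\partial_{y^n}\omega=0$ and subtracting the constant $c_0\,vol_0(M)$. This is a technical simplification rather than an essential choice, but it is worth adopting.
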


The function $d_\Gamma$ from the theorem is
the signed distance to $\Gamma$ with respect to the ${\bf h}$ metric.
In fact under our hypotheses it is uniformly comparable to the signed Euclidean distance
to $\Gamma$, so we could replace $d_\Gamma$ in \eqref{weightedenergy}
by the Euclidean squared distance with changes only to constants (depending on 
$\Gamma$ and the choice of the neighborhood $N'$).

Our proof yields additional information that we have not recorded in the statement of the theorem, including the following:
\begin{itemize}
\item We show that $\int \e |Du|^2 + \frac 1 \e F(u) \ge C >0$ for
all small $\e$, so \eqref{weightedenergy} implies that the energy
is strongly concentrated near $\Gamma$.
\item We find  certain vector fields $X$, depending only on the geometry of $\Gamma$,
such that  $\| X \cdot Du \|_{L^2(N_{T_0})}^2 \le C \e$.

\item Our arguments in fact establish not just estimates of some specific
solutions, but also more general stability estimates,
see Proposition  \ref{localest} for example.

\end{itemize}

\subsection{discussion}

As in \cite{j-mms} and Section \ref{S:simple}, 
the heart of the proof of Theorem \ref{T.mink} 
consists of weighted energy estimates in
well-chosen coordinates near $\Gamma$.
In particular, we will introduce coordinates $(y^0,\ldots, \yn)$ such that 
$\Gamma = \{ (y^0,\ldots, \yn) : \yn = 0 \}$, and in addition
$\yn \mapsto c(\yn) = (y^0,\ldots, \yn)$ is 
(approximately) a geodesic with respect to the Lorentzian metric
for every $(y^0,\ldots, y^{n-1})$,
with $c(0)\in \Gamma$ and $c'(0)$ normal to $\Gamma$.
A key point is that the geometry of
$\Gamma$ is exactly such that, when the equation \eqref{nlwN}
is written in the these coordinates, some cancellations occur that
make very strong energy estimates possible.

\smallskip

Writing $v$ to denote the solution of \eqref{nlwN} in the $y$ coordinates, 
the initial data we consider will have the form
\beq
v(0,y^1,\ldots, \yn) \approx q(\frac \yn \e) =: q_\e(\yn),
\qquad\quad
\partial_{y^0}v(0,y^1,\ldots, \yn) = 0
\label{v.data00}\eeq
near $\Gamma = \{ \yn = 0 \}$, where $q$ solves
\beq
-q''+ f_0(q) = 0,\qquad q(0)=0, \qquad q(s)\to \pm 1\mbox{ as }s\to \pm\infty.
\label{q.def}\eeq
Existence of a profile $q$  solving \eqref{q.def} is standard. Indeed, 
multiplying by $q'$, integrating, and using \eqref{f0f1},
one finds that the unique solution of
\beq
q' - f_1(q)=0, \qquad q(0)=0
\label{q.def2}\eeq
also satisfies \eqref{q.def}. 
We remark for future reference that standard ODE arguments, using 
the fact that $f_1'(\pm 1) >0$ (see \eqref{doublewell}), imply that
\beq
|q(s) - \sign(s)| \le C e^{- c|s|} \qquad \mbox{ as $s \to  \pm\infty$}.
\label{decay}\eeq

As is well-known, the profile $q_\e$ is characterized by an optimality property.
Indeed, for any  $\wt q:\R\to \R$,
\[
f_1(\wt q) \wt q' \  \le \ 
\frac \e 2 \wt q'^2 + \frac 1{2\e}f_1^2(\wt q)  \ = \ 
\frac \e 2 \wt q'^2 + \frac 1{\e}F(\wt q) 
\]
by \eqref{f0f1}.
Thus if $\wt q(s)\to \pm 1$ as $s\to \pm\infty$, then
\beq
c_0 := \int_{-1}^1 f_1(s)\, ds\ 
\ =  \ \int_{-\infty}^\infty f_1(\wt q(s)) \, \wt q'(s)\, ds  \   \le
\int_{-\infty}^\infty \frac \e 2 \wt q'^2 + \frac 1 {\e}F(\wt q) \,ds.
\label{q.optimal}\eeq
Moreover, equality holds if and only if $\e\wt q' = f_1( \wt q)$, which 
occurs exactly when $\wt q$ is a translate of
the profile $q_\e$  above.

\smallskip

Formal arguments suggest that the solution $v$ with
initial data \eqref{v.data00} should satisfy
$
v(y^0,\ldots, \yn) \approx q_\e(\yn).
$
As in Remark \ref{r:extra}, our basic estimate \eqref{L2est}
in fact implies that $\int [ v(y^0,\ldots, \yn) - q_\e(\yn)]^2 \le C \e$,
but is not sharp enough
to distinguish the shape of the profile; that is, it does not allow us to say
whether $\sign(\yn)$ or $q_\e(\yn)$ is closer to $v$.
But, again as in Remark \ref{r:extra}, estimates established in the course
of the proof  in fact imply that for most $(y^0,\ldots, y^{n-1})$,
\[
\int \left| v(y^1,\ldots, y^{n-1}, \yn) - q( \frac{( \yn  -  y^n_0 )}\e)  \right|^2  \frac{d\yn}\e \le  C \e^2
\]
for some translation $y^n_0 = y^n_0(y^0,\ldots, y^{n-1})$
such that $|y^n_0| \le C \e$. 
Indeed, this follows from estimates we establish
in Proposition \ref{localest} of $\zeta_1$,
defined in \eqref{zeta1.def} below, together with spectral estimates like
those discussed in Remark \ref{r:extra}.

\subsection{about the  nonlinearity}\label{ss:nonlin}

In equation \eqref{nlwN}, 
we have assumed a nonlinearity $f_\e(x,u) :=  f_0(u) + \e\kappa(x) f_1(u)$,
for $f_0,f_1$ satisfying \eqref{f0f1},
that appears to have a very special form.

This is not actually the case  if $f_\e$ depends only on $u$; in this case a 
nonlinearity $f_\e$ associated to a general 
double-well potential can be written in this form.
Indeed,
assume that $f_\e = F_\e'$,  
where $F_\e:\R\to \R$ is 
a smooth function
with nondegenerate
local minima at two points, say $\pm 1$, a local maximum
at some point in $(-1,1)$, and no other critical points.
We claim that if these hold, then there exists a 
number $\kappa$ and a smooth,
nonnegative function $F$ satisfying
\eqref{doublewell} (and in particular 
vanishing exactly at $x = \pm 1$), such that
\beq
f_\e = F' + \e \kappa \sqrt{2F}\sign_{(-1,1)}.
\label{decompose_f}\eeq
If we define $f_0 = F'$ and $f_1 = \sqrt{2F}\sign_{(-1,1)}$, then this is exactly \eqref{f0f1}.
We sketch a proof:
For $\kappa>0$, 
let $F^\kappa(s)$ be the unique solution for $s >-1$
of the ODE \eqref{decompose_f} with initial data 
$F^\kappa(-1) = 0$.
(Despite the non-Lipschitz nonlinearity, uniqueness can be deduced from the nondegeneracy of $F_\e$,
which implies that $f_\e'(\pm 1)>0$.)
Then a shooting argument, using properties  of $f_\e$ that follow from
our assumptions about $F_\e$, shows that there is exactly one choice of $\kappa$
such that $F^\kappa(s)$ exists and is positive for $s\in (-1,1)$, and in addition
$F^\kappa(1)= 0$. We then define $F^\kappa$ outside of $[-1,1]$
by requiring that it solve \eqref{decompose_f} everywhere. 
Then $F = F^\kappa$ satisfies \eqref{doublewell}
and also solves \eqref{decompose_f} for the
value of $\kappa$ found in this argument. 

\smallskip

On the other hand, if $f_\e$ depends nontrivially on $x$, then
the form of $f_\e$ has very particular useful properties that will be exploited in our analysis.
Notably, for $f_\e$ of this form, the optimal profile $q_\e$
is formally independent of the value of $\kappa$.
The point is that $q_\e$ can be characterized
in terms of either (scaled versions of) \eqref{q.def} or \eqref{q.def2},
and hence satisfies
\beq\label{trwave3}
\e( q'' + \kappa q') = \frac 1 \e f_\e(q) = \frac 1 \e(f_0(q) + \e \kappa f_1(q))  
\eeq
for every $\kappa\in \R$. (Related issues appear also in 
\eqref{trwave1}, \eqref{trwave2}.) This simplifies our analysis.
We believe, however, that our arguments could be adapted to study nonlinearities with more general  dependence on $x$,
at the expense of some technical complications and probably weaker estimates.

\section{an adapted coordinate system}\label{S:adapted}

We now introduce the coordinate
system near $\Gamma = \partial I\cap N_{T^*}$ in which our main estimates will take place.

\subsection{ a good parametrization of $\Gamma$}\label{S:goodp}

Fix  a smooth $(n-1)$-dimensional manifold $M$ diffeomorphic to 
$\Gamma_0 := \Gamma \cap \Sigma_0$. Our assumptions imply that $M$ is compact.
The example that arises most
naturally in cosmological settings is  $M = S^{n-1}$.
We will  write $(y^1,\ldots, y^{n-1})$, or simply $y'$, to denote
local coordinates on $M$.  

For $T>0$, we will write
\beq
M_T := (-T,T)\times M,\qquad
\label{MTrho}\eeq
We will always use
{\em standard local coordinates}
on $M_T$, by which we mean
coordinates of the form
$(y^0,y')$,
where $y^0\in (-T,T)$ and 
$y' = (y^1,\ldots, y^{n-1})$ are local coordinates on $M$.
We will often write $\yt = (y^0,y')$ to denote a point in $M_T$, where the
superscript $\tau$ stands for ``tangential".

We will parametrize $\Gamma\subset N_{T^*}$ by a smooth map
$\Psi: M_{T^*} \to N_{T^*}$
of the form 
\begin{equation}
\Psi(\yt) = (y^0, \psi(\yt))\quad\mbox{ for some }
\psi: M_{T^*}\to \R^n.
\label{Psi1}\end{equation}
(Here and below, we use the fixed coordinate system on $N$ to identify it
with $\R^{1+n}$.)
Note that with this convention and condition \eqref{hsplit} on the metric $(h_{\alpha\beta})$, 
assumption \eqref{zerovelocity}
becomes
\beq\label{zv2}
\frac{\partial \psi}{\partial y^0}(0,y') = 0\qquad\mbox{ for all }y'\in M.
\eeq
 We also impose the condition
\begin{equation}
\gamma_{0a} = \gamma_{a0}  = 0
\qquad\mbox
{for }a=1,\ldots, n-1,
\label{Psi2}\end{equation}
where here and in what follows, we use the notation
\begin{equation}
\gamma_{ab}  : = {\bf h}(\frac{\partial\Psi}{\partial y^a}, 
\frac{\partial\Psi}{\partial y^b}) \qquad\qquad\mbox
{for }a,b =0,\ldots, n-1.
\label{gammaab}\end{equation}
The existence of such a parametrization  $\Psi$
is rather standard. Indeed, suppose we
are given local coordinates
$y' = (y^1,\ldots, y^{n-1})$ on a subset of $M$. By definition
$M$ is diffeomorphic to $\Gamma_0$, so we may fix a diffeomorphism
$y'\to \psi_0(y')\in \Gamma_0$.
Then for every $y'$ and sufficiently small $\delta>0$
there exists a unique curve
$p = p(\cdot; y'):(-\delta, \delta)\to N$, with components $(p^0,\ldots, p^n)$,
such that 
\[
p^0(t) = t,
\qquad
p(0; y') = \psi_0(y'),
\qquad
{\bf h}(p'(t) , \frac{\partial\Psi}{\partial y^a})= 0
\quad\mbox{for  $a =0,\ldots, n-1$ and $|t| < \delta$.} 
\]
Indeed, in coordinates  this is just a first-order ODE for $p(\cdot)$ to which standard theorems
apply.
It is then easy to see that
$\delta = \delta(y')$ is bounded away from zero on small enough subsets,
and on those sets can define  $\Psi(y^0, y') := p(y^0, y')$.

\subsection{almost-normal coordinates near the hypersurface}
For $r$ positive, we will write 
\beq\label{Mrho}
M^r :=  M\times (-r,r).
\qquad\quad
M^r_T :=M_T \times (-r,r).
\eeq
As above, we will always use standard local coordinates
on these spaces, that is, coordinate systems that respect
the product structure. Thus, in these coordinates,
points in $M_T^r$ have the form $(\yt, \yn) = (y^0, y', \yn)$, 
where $y'$ are local coordinates on $M$,
$|y^0|<T$ and $|\yn|<r$.

The next proposition introduces a map $\phi:M_T^{2\rho}\to N$
that parametrizes a neighbourhood of $\Gamma$ and such that
certain good properties are enjoyed by the pullback metric
\[
g_{\alpha\beta} := {\bf h}( 
\frac {\partial \phi}{\partial y^\alpha},
\frac {\partial \phi}{\partial y^\beta}),
\qquad
(g^{\alpha\beta}) := (g_{\alpha\beta})^{-1}, \qquad 
g:= \det (g_{\alpha\beta})
\]
where  $\alpha, \beta \in \{0,\ldots, n\}$.
The proof will be deferred to Section  \ref{S:Phi}. We remark however
that $\phi$ essentially defines a Gaussian normal coordinate system
near $\Gamma$, modified slightly to arrange that condition
\eqref{Phi_b} below holds; this condition implies that
changing variables using $\phi$ maps Cauchy problems 
for \eqref{nlwN}, with data given for $x^0=0$,  to
Cauchy problems with data given on the hypersurface $\{ y^0=0 \}$.
This will be useful.

\begin{proposition}\label{Prop_Phi}
For every $T_0<T^*$, there exists 
$\phi:M_T^{2\rho} \to N$, for some
$T\in (T_0, T^*)$ and $\rho>0$, such that
 such that 
$\phi$ is a diffeomorphism 
onto its image, and 
the following hold. First, $\phi(\yt, 0) = \Psi(\yt)$, which implies that
\beq
\Gamma \cap N_{T_0}  = \phi ( M_{T_0} \times \{0\}),
\label{Phi_a}\eeq
and hence that
\beq
N' := \phi( M_T^{2\rho}) \mbox{ is an open neighbourhood of $\Gamma\cap N_{T_0}$.}
\label{Nprime.def}\eeq
In addition,
\beq
\mbox{ if $y^0\in \bar M_T^{2\rho} $ and $|y^0| = T$, then $|x^0|> T_0$ for $x = \phi(y)$.}
\label{Trho}\eeq
Second, 
\beq\label{Phi_b}
\phi( \{0\} \times M^{2\rho}) \subset \Sigma_0 = \{ x\in N\cong \R^{1+n} : x^0=0\}.
\eeq
Third, the metric satisfies
\beq
(g_{\alpha\beta})(\yt, \yn) = 
\left(
\begin{array}{ll}
(\gamma_{ab})(\yt) &0\\
0& \mbox{1}
\end{array}\right) +
O
\left(
\begin{array}{ll}
|\yn|&|\yn|  \\
|\yn|&(\yn)^2 
\end{array}\right) 
\label{glowerab}\eeq
(in block  form), where $(\gamma_{ab})$ was introduced in
\eqref{gammaab}. 
Hence
\beq
(g^{\alpha\beta})(\yt, \yn) = 
\left(
\begin{array}{ll}
(\gamma^{ab})(\yt) &0\\
0& \mbox{1}
\end{array}\right) +
O
\left(
\begin{array}{ll}
|\yn|&|\yn|  \\
|\yn|&(\yn)^2 
\end{array}\right) 
\label{gupperab}\eeq
In addition,
\beq
(\partial_0 g^{\alpha\beta})(\yt, \yn) = 
O
\left(
\begin{array}{ll}
1&|\yn|  \\
|\yn|&(\yn)^2 
\end{array}\right) 
\label{dtgupperab}\eeq
Next, the eikonal equation \eqref{dGamma}
has a unique smooth solution $d_\Gamma$ on $N'$,
and if we define $\pi^n(y^0,\ldots, \yn) = \yn$, then
\beq
\pi^n\circ \phi^{-1} = d_\Gamma +O(d_\Gamma^2).
\label{approx.d}\eeq
Finally,
\beq
\frac {-1}{\sqrt{-g}} g^{n\alpha}\partial_{y^\alpha} \sqrt{-g}  \ = \ 
\frac {-1}{\sqrt{-g}} \partial_{\yn} \sqrt{-g} +O(|\yn|)   \ = \ 
-\kappa(\yt) + O(|\yn|). 
\label{g3}\eeq
\end{proposition}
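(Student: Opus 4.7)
The plan is to construct $\phi$ as the time-$\yn$ flow, starting at $\Psi(\yt)$, of a carefully chosen smooth vector field $V$. I would require $V$ to (i) equal the ${\bf h}$-unit normal $\nu$ to $\Gamma$ at every point of $\Gamma$, (ii) be tangent to $\Sigma_0$ at every point of $\Sigma_0$, and (iii) have ${\bf h}$-unit length throughout a neighbourhood of $\Gamma \cap N_{T_0}$. Conditions (i) and (ii) are compatible on $\Gamma_0 := \Gamma \cap \Sigma_0$ precisely because of \eqref{zv2}: one has $\partial_{y^0}\Psi(0, y') = (1, 0, \dots, 0)$, which together with \eqref{hsplit} forces the ${\bf h}$-normal $\nu$ to have zero $x^0$-component along $\Gamma_0$, hence to lie in $T\Sigma_0$. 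I would construct $V$ by first prescribing it on $\Gamma$ and on $\Sigma_0$ (on $\Sigma_0$ as any smooth tangent-to-$\Sigma_0$ extension of the Riemannian unit normal to $\Gamma_0$), extending smoothly off $\Gamma \cup \Sigma_0$, and normalizing to unit ${\bf h}$-length. Then $\phi(\yt, \yn) := \mathrm{Flow}_V^{\yn}(\Psi(\yt))$ immediately satisfies \eqref{Phi_a}, and \eqref{Phi_b} follows because for $y^0 = 0$ the flow starts at $\psi_0(y') \in \Sigma_0$ and $V$ is tangent to $\Sigma_0$. The inverse function theorem combined with compactness of $\overline{\Gamma \cap N_{T_0}}$ produces uniform $\rho > 0$ and $T > T_0$ on which $\phi$ is a diffeomorphism; shrinking $T$ slightly gives \eqref{Trho}.

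Next I would verify the metric expansions. Since $\partial_\yn \phi|_{\yn=0} = \nu$, with $|\nu|_{\bf h} = 1$ and $\nu$ ${\bf h}$-orthogonal to $T\Gamma$, one has $g_{nn}|_{\yn=0} = 1$, $g_{n\alpha}|_{\yn=0} = 0$ for $\alpha < n$, and $g_{ab}|_{\yn=0} = \gamma_{ab}$. Because $|V|_{\bf h} \equiv 1$ along every integral curve, in fact $g_{nn} \equiv 1$, so the $(\yn)^2$ diagonal error in \eqref{glowerab} is zero; the remaining errors come from Taylor expansion about $\yn = 0$ together with the vanishing of $g_{n\alpha}$ there. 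Inverting the resulting block structure gives \eqref{gupperab}, and differentiating in $y^0$ then yields \eqref{dtgupperab}.

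For the distance function, I would solve the eikonal equation \eqref{dGamma} directly via characteristics: the bicharacteristic curves are ${\bf h}$-geodesics emanating ${\bf h}$-normally from $\Gamma$, and \eqref{lorentzian} makes them smoothly transverse to $\Gamma$, producing a unique smooth solution $d_\Gamma$ on some neighbourhood $N'$ of $\Gamma \cap N_{T_0}$, with $d_\Gamma > 0$ in $I \cap N'$ by the sign convention. Since $\pi^n \circ \phi^{-1}$ vanishes on $\Gamma$, has the correct unit normal derivative there, and satisfies the eikonal equation to leading order (as $g^{nn} = 1 + O((\yn)^2)$), agreement modulo $O(d_\Gamma^2)$ follows by standard uniqueness, which gives \eqref{approx.d}; the bound away from $\Gamma$ is then a continuity-and-compactness argument. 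For \eqref{g3}, I would use $g^{n\alpha} = \delta^{n\alpha} + O(|\yn|)$ from \eqref{gupperab} together with the identity $-(\sqrt{-g})^{-1} \partial_\yn \sqrt{-g}|_{\yn=0} = -\frac{1}{2} g^{ab} \partial_\yn g_{ab}|_{\yn=0}$, which by a short Levi-Civita computation equals the Lorentzian mean curvature of $\Gamma$ in the direction of $\nu$; by \eqref{pmcm} this is $-\kappa(\yt)$, and Taylor expansion in $\yn$ gives the stated $O(|\yn|)$ error.

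The main technical obstacle is the coupled requirement that $\phi$ both normalize the metric into the simple block form \eqref{glowerab} near $\Gamma$ and respect the initial-time slice $\Sigma_0$. Pure Fermi-type normal coordinates $\exp^N_{\Psi(\yt)}(\yn \nu(\yt))$ would give $g_{nn} \equiv 1$ and $g_{n\alpha} \equiv 0$ everywhere, but will generally fail \eqref{Phi_b} unless $\Sigma_0$ is totally geodesic; the vector-field modification above keeps enough of this structure to produce the claimed leading-order form, and the compatibility at $\Gamma_0$ of the two prescriptions for $V$ is precisely the content of the zero-initial-velocity hypothesis \eqref{zv2}.
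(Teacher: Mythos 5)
Your proposal is essentially correct, but it takes a genuinely different route from the paper. The paper first builds the map $\wt \phi(\yt,\yn)=\exp_{\Psi(\yt)}(\yn\bar\nu(\yt))$ (Gaussian/Fermi normal coordinates, for which $\wt g_{n\alpha}=\delta_{n\alpha}$ exactly by the Gauss lemma) and then composes with the shift $(y^0,y',\yn)\mapsto(y^0-\sigma(y',\yn),y',\yn)$, where $\sigma$ is produced by the implicit function theorem to enforce \eqref{Phi_b} and is shown to satisfy $\sigma=O((\yn)^2)$; this quadratic smallness, which rests on exactly the same observation you isolate (by \eqref{zv2} and \eqref{hsplit} the unit normal has vanishing $x^0$-component along $\Gamma\cap\Sigma_0$), gives $g_{\alpha\beta}=\wt g_{\alpha\beta}+O((\yn)^2)$, from which \eqref{glowerab}--\eqref{dtgupperab} and \eqref{g3} follow, and it makes \eqref{approx.d} almost free since $d_\Gamma=\pi^n\circ\wt\phi^{-1}$ solves the eikonal equation exactly. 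Your construction instead flows along a unit vector field $V$ interpolating between $\nu$ on $\Gamma$ and a $\Sigma_0$-tangent field; this buys \eqref{Phi_b} and $g_{nn}\equiv1$ exactly, at the cost of losing the exact vanishing of $g_{na}$ off $\Gamma$ (your flow lines are not geodesics, so $\partial_{\yn}g_{na}$ need not vanish at $\yn=0$), and of having to produce $d_\Gamma$ separately by characteristics, i.e.\ implicitly reconstructing the normal exponential map that the paper makes explicit. Your weaker structure still suffices: $g_{na}(\cdot,0)=0$ plus smoothness gives the $O(|\yn|)$ off-diagonal entries, the cross terms in $-\tfrac12 g^{\alpha\beta}\partial_{\yn}g_{\alpha\beta}$ are killed at $\yn=0$ by $g^{na}=0$ there so your mean-curvature computation for \eqref{g3} is valid, and \eqref{approx.d} follows because the full first-order jets of $\pi^n\circ\phi^{-1}$ and $d_\Gamma$ agree along $\Gamma$ (that, rather than ``uniqueness'' for an equation that $\pi^n\circ\phi^{-1}$ only approximately solves, is the clean justification). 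Two points deserve more care than you give them: the smooth extension of $V$ off $\Gamma\cup\Sigma_0$ needs the transversality of $\Gamma$ and $\Sigma_0$ (automatic since $\Gamma$ is timelike) together with the zeroth-order matching on $\Gamma_0$; and \eqref{dtgupperab} cannot be obtained by ``differentiating'' the $O$-estimates in \eqref{gupperab} --- argue as the paper does, via $\partial_0 g^{\alpha\beta}=-g^{\alpha\mu}(\partial_0 g_{\mu\nu})g^{\nu\beta}$, using that your construction gives $\partial_0 g_{nn}\equiv0$ and $\partial_0 g_{na}(\cdot,0)=0$.
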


\begin{remark}\label{rem:uniform}
The implied constants in the above estimates could in principle
depend on the choice of local coordinates for $M$. However,
our assumptions imply that $M$ is compact, and so we may  once and for all fix
a cover of $M$ by a finite collection
of coordinate neighborhoods $U_1,\ldots, U_k$, such that
all the above estimates are uniform on $(-T,T)\times U_i\times (-2\rho, 2\rho)$
for every $i$. (This last point will be evident from
the proof of the proposition.)
We can then require that all subsequent computations in local coordinates
are carried out in one of these fixed coordinate systems. Having done this,
all the above constants are uniform on $M_T^{2\rho}$.
The same remark
applies below as well.
\end{remark}

Our later energy estimates will contain a  symmetric tensor $(a^{\alpha\beta})$, defined by
\beq
\frac 12 a^{\alpha\beta}  \xi_\alpha \xi_\beta  := -g^{0\alpha} \xi_\alpha \xi_0 + \frac 12 g^{\alpha\beta}\xi_\alpha\xi_\beta
=
- \frac 12 g^{00} \xi_0^2 +  \frac 12 \sum_{i,j=1}^n g^{ij} \xi_i \xi_j.
\label{a.def}\eeq
It follows from \eqref{gupperab} that if $\rho$ is taken to be small enough
(which we henceforth assume to be the case) then
there exists some positive constants $c_2, c_3, c_4, c_5$ such that
\beq
\frac 12 \sum_{a,b=0}^{n-1}a^{ab} \xi_a\, \xi_b \ + (1+ (\yn)^2)\  \xi_n^2
\ \le \ 
(1+ c_2(\yn)^2) a^{\alpha\beta}\xi_\alpha \xi_\beta \ \le \ 
2 \sum_{a,b=0}^{n-1}a^{ab}\xi_a \xi_a \ + (1+ c_3(\yn)^2) \xi_n^2,
\label{apositive}\eeq
\beq
\sum_{a,b=0}^{n-1}\delta^{ab}\xi_a\xi_b \le c_4 \sum_{a,b=0}^{n-1}a^{ab}\xi_a\xi_b,
\label{a++}\eeq
and
\beq
|g^{n\alpha}\xi_\alpha \xi_0| \ \le \  \frac {c_5}2 a^{\alpha\beta}\xi_\alpha \xi_\beta
\label{a+++}\eeq
everywhere in $M^{2\rho}_T$, 
for all $\xi\in \R^{1+n}$.

\section{weighted energy estimates in normal coordinates}\label{S:weenc}

In this section we study the wave equation
\beq
\e \Box_{\bf g} v + \frac 1\e f_0(v) + \kappa f_1(v ) = 0, \qquad
v:M_T^{2\rho}\to \R
\label{nlwM}\eeq
Here ${\bf g}$ is a metric on $M_T^{2\rho}$ satisfying the
conclusions of Proposition \ref{Prop_Phi}, $\kappa$ is a smooth, bounded function on
$M_T^{2\rho}$, and $f_0,f_1$ satisfy \eqref{f0f1}. 
In particular, if $u$ solves \eqref{nlwN} on $N$ then $v := u \circ \phi$
solves \eqref{nlwM} on $M_T^{2\rho}$.

To state our main estimates, we need some notation.
We start by fixing a smooth volume form $d(vol)_0$ on $M$, and
we extend it to a volume form $d(vol)$ on $M_T^{2\rho}$ by requiring that 
\beq
d(vol) = dy^0 \wedge d(vol)_0 \wedge d\yn
\label{extendvol}\eeq
in standard local coordinates. 
We emphasize that $d(vol)$ in general does {\em not} coincide with the volume form
associated to the Lorentzian metric ${\bf g}$.

 We will similarly extend $d(vol)_0$
to $M_T$ and $M^{2\rho}$, writing $d(vol)$ in every case; the meaning
should always be clear from the context.

Thus, in standard local coordinates these are represented by expressions
of the form
\begin{align*}
d(vol)_0 &=\  \omega_0(y') \, dy^1\wedge\cdots \wedge dy^n,\\
d(vol) &=  \ \omega(y) \, dy^0 \wedge dy^1\wedge\cdots \wedge dy^n \wedge d\yn \qquad\mbox{ on }M_T^{2\rho}
\end{align*}
where $\omega(y^0,y', \yn) = \omega_0(y')$ in $M_T^{2\rho}$. Here $\omega_0$ is a smooth positive function that depends on our choice of local coordinates for $M$.
Similar expressions hold for $d(vol)$ on $M_T$ and $M^{2\rho}$.

Next, we define a natural energy density associated to \eqref{nlwM}. For  $v\in H^1(M_T^{2\rho})$,
let
\beq
e_\e(v; {\bf g}) := \frac \e 2\, a^{\alpha\beta}  \partial_{y^\alpha} v \  \partial_{y^\beta}v
+ \frac 1{\e} F(v),
\label{eeps.def}\eeq
for $F$ defined in \eqref{doublewell},  and $a^{\alpha\beta}$ defined in \eqref{a.def}.
We will write simply $e_\e(v)$ when there is no ambiguity, which will be
the case throughout this section. Finally, recall that we have defined
\[
c_0 := \int_{-1}^{1} f_1(u) \ du,
\]
and that, as noted in \eqref{q.optimal},
$c_0$ is a lower bound for the energy of a $1$-d interface connecting
the equilibrium states $\{\pm 1\}$, and this lower bound is attained by the profile $q$.

The following estimates are the heart of the proof of Theorem \ref{T.mink}.


\begin{proposition}
Let $v$ be a smooth solution of \eqref{nlwM} on $M_T^{2\rho}$, and 
assume that ${\bf g}$ satisfies the conclusions of Proposition \ref{Prop_Phi}.
Define $\rho(s) = \rho - c_5s$, for $c_5$ defined in \eqref{a+++}, and
\begin{align}
\zeta_1(s) 
&:= 
\ 
\left. \int_{M^{\rho(s)}} (1+c_2(\yn)^2)\, e_\e(v) d(vol) \right|_{y^0=s} - c_0 vol_0(M) \ 
 \label{zeta1.def}\\
\zeta_2(s) 
&:= 
\left.\int_{M^{\rho/2}} |\yn| \ |v - \sign (\yn)|^2 d(vol)\right|_{y^0=s} 
\label{zeta2.def} \\
\zeta_3(s) 
&:= 
\left. \int_{  M^{\rho(s)}}
\frac \e 2\sum_{a,b = 0}^{n-1} a^{ab}\, v_{y^a}\, v_{y^b} +  (\yn)^2 \left[  \frac \e 2  |\partial_{\yn} v|^2 + \frac 1{\e}F(v) \right] \ d(vol) 
\right|_{y^0=s}
\label{zeta3.def}\end{align}
Then there exists a constant $C$, independent of $v$ and of $\e\in (0,1]$, such that
\beq\label{Ploc.c1}
\zeta_i(s) \le C  
\max \big(  \zeta_1(0), \zeta_2(0) \big)
\qquad\mbox{ for }i=1,2,3\mbox{ and } 0<s < s_1 := \min( T, \rho/(3 c_5))
\eeq
\label{localest}\end{proposition}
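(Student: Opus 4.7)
The plan is to adapt the one-dimensional weighted-energy argument of Section \ref{S:simple} to the present geometric setting: $\yn$ plays the role of $r-r_0$, $\yt$ the role of $\theta$, and the mean-curvature identity \eqref{g3} replaces the algebraic relation $\kappa = 1/r_0$. The skeleton has four steps: (1) a pointwise differential identity for $e_\e(v)$; (2) a weighted integral version that admits a Gr\"onwall argument; (3) the fibrewise lower bound $\zeta_3 \le C\zeta_1$ via \eqref{q.optimal}; and (4) a displacement estimate to pass from $\zeta_1$ to $\zeta_2$.

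For Step 1, I multiply \eqref{nlwM} by $v_{y^0}$ and integrate against $d(vol)$, which, crucially, is \emph{not} the Lorentzian volume form associated to ${\bf g}$ (cf.\ Remark \ref{keypoint}). This should produce an identity of the schematic form
\[
\partial_{y^0} e_\e(v) = \partial_{y^\alpha}\bigl(\e\, a^{\alpha\beta} v_{y^0}\, v_{y^\beta}\bigr) + \mbox{Term 1} + \mbox{Term 2},
\]
where Term 1 collects all contributions of $\partial_{y^\alpha} a^{\alpha\beta}$ and $\partial_0 a^{\alpha\beta}$, each carrying a factor of $|\yn|$ by \eqref{gupperab}--\eqref{dtgupperab}, and Term 2 is the residual from matching the forcing $\kappa f_1(v)$ against the mean-curvature piece $\e\, g^{n\alpha}\partial_\alpha(\sqrt{-g})/\sqrt{-g}\cdot v_{y^n}$, which by \eqref{g3} equals $-\e\kappa v_{y^n} + O(|\yn|)$. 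I therefore expect Term 2 to split as $\kappa\, v_{y^0}\bigl(\e v_{y^n} - f_1(v)\bigr) + O(|\yn|)\cdot(\mbox{gradient energy})$, the direct analog of the corresponding piece in \eqref{polar.energy}.

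In Step 2, I multiply by $1+c_2(\yn)^2$ and integrate over $M^{\rho(s)}$. The shrinking rate $\rho'(s)=-c_5$ is calibrated so that \eqref{a+++} forces the boundary flux at $\yn = \pm\rho(s)$ to carry a favorable sign and may therefore be dropped. Term 1 is absorbed into $C(\zeta_1+\zeta_3)$ via Cauchy--Schwarz, using its $|\yn|$ factors. The main part of Term 2 is handled exactly as in \eqref{qest}, by completing the square fibrewise in $\yn$: $\int \tfrac{\e}{2}\bigl(v_{y^n}-\tfrac{1}{\e}f_1(v)\bigr)^2 d\yn = \int\bigl(\tfrac{\e}{2}v_{y^n}^2+\tfrac{1}{\e}F(v)\bigr)d\yn - \int f_1(v) v_{y^n}\,d\yn$, with the last integral close to $c_0$ by finite propagation speed and \eqref{f0f1}. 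Step 3 then follows by a fibrewise application of \eqref{q.optimal}, giving $\int_{M^{\rho(s)}}\tfrac{\e}{2}v_{y^n}^2+\tfrac{1}{\e}F(v)\,d(vol) \ge c_0\, vol_0(M)$ and hence $\zeta_3 \le C\zeta_1$ via \eqref{apositive}. Altogether this closes a Gr\"onwall inequality $\zeta_1'(s) \le C\zeta_1(s)$, producing $\zeta_1(s) \le e^{Cs}\zeta_1(0)$ on $[0,s_1]$.

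The bound on $\zeta_2$ is what I expect to be the main obstacle. Differentiating in $s$ and applying Cauchy--Schwarz yields $\zeta_2'(s) \le 2\zeta_2(s)^{1/2}\bigl(\int_{M^{\rho/2}}|\yn|v_{y^0}^2\,d(vol)\bigr)^{1/2}$, so the problem reduces to bounding the right-hand integral by $C\zeta_1(s)$ \emph{uniformly in $\e$}. A direct Cauchy--Schwarz argument loses a factor of $1/\e$, since $\int v_{y^0}^2 \sim \zeta_1/\e$; the resolution is to exploit the concentration of $v_{y^0}$ near $\yn = 0$, splitting the integral at $|\yn|=\e$, using the trivial $|\yn|\le \e$ bound on the inner region, and using the weighted estimate $\int (\yn)^2 v_{y^0}^2 \le C\zeta_1/\e$ -- which is buried inside $\zeta_1$ via the $c_2(\yn)^2 a^{00}$ piece -- on the outer region. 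Combining with the Step 2 bound then gives $\zeta_2(s) \le C\zeta_2(0) + Cs^2\zeta_1(0) \le C \max(\zeta_1(0),\zeta_2(0))$, as required.
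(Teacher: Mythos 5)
Your skeleton (the differential energy inequality, the $c_5$-shrinking domain so that \eqref{a+++} makes the lateral flux sign-favorable, and a Gr\"onwall closure) matches the paper's. But two steps where you wave your hands are exactly where the real work lies, and as written they fail. First, your Steps 2--3 assume a fibrewise transition: you invoke \eqref{q.optimal} to get $\int e_{\e,\nu}(v)\,d\yn \ge c_0$ on each fibre, and you claim $\int f_1(v)\,v_{\yn}\,d\yn$ is ``close to $c_0$ by finite propagation speed,'' as in \eqref{qest}. In the $1$-d model problem this was legitimate because \eqref{rbc} pinned $v=\mp1$ at the ends of every fibre; here the proposition concerns an \emph{arbitrary} solution on $M_T^{2\rho}$ with no boundary behaviour at $\yn=\pm\rho(s)$ and no assumption on the data beyond smallness of $\zeta_1(0),\zeta_2(0)$, so nothing forces $v(\yt,\pm\rho(s))\approx\pm1$ (indeed $v\equiv 1$ makes both claims false, and $\zeta_1$ can even be negative, so a self-contained inequality $\zeta_1'\le C\zeta_1$ cannot close). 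This is precisely why the paper proves Lemma \ref{L.1} under the fibrewise hypothesis \eqref{L.h1}, introduces the good/bad point decomposition \eqref{goodpt}--\eqref{verygoodpt}, controls the measure of bad points by $C(\zeta_1(s)+\zeta_2(s))+Ce^{-c/\e}$ via Chebyshev, and runs Gr\"onwall on the \emph{coupled} system in $(\zeta_1,\zeta_2,\zeta_3)$ --- this coupling is the reason $\max(\zeta_1(0),\zeta_2(0))$ appears in the conclusion. The same issue infects your treatment of the $\kappa\,v_{y^0}(\e v_{\yn}-f_1(v))$ term: the equipartition bound \eqref{equipart0} is only available on good fibres.

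Second, your fix for the $\zeta_2$ estimate does not repair the $1/\e$ loss. On the outer region $|\yn|\ge\e$ you bound $|\yn|\,v_{y^0}^2\le (\yn)^2 v_{y^0}^2/\e$ and then use $\int(\yn)^2 v_{y^0}^2\le C\zeta_1/\e$; this yields $C\zeta_1/\e^2$, not $C\zeta_1$, so the resulting differential inequality for $\zeta_2$ is worse than the naive one, and the claimed conclusion $\zeta_2(s)\le C\zeta_2(0)+Cs^2\zeta_1(0)$ does not follow. No estimate of the form $\int|\yn|v_{y^0}^2\le C\zeta_1$ is true in general: the time-derivative energy only enters the $\zeta$'s with a factor $\e$. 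The paper avoids this entirely with Lemma \ref{L.2} (Lemma 12 of \cite{j-mms}), which bounds $\int|\yn|\,|v(0,\yn)-v(\tau,\yn)|^2\,d\yn$ by $C\iint \frac\e2 v_{y^0}^2+\frac{(\yn)^2}\e F(v)$, i.e.\ by $C\int_0^s\zeta_3$; the point of that lemma is an interpolation exploiting the potential term --- $v$ can only change by an $O(1)$ amount on a fibre by paying either $\e v_{y^0}^2$ or $F(v)/\e$ --- rather than Cauchy--Schwarz on $v_{y^0}$ alone. You need either that lemma or a substitute for it; splitting at $|\yn|=\e$ is not one.
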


Note that the constants in Proposition \ref{localest} may depend 
for example on 
$vol_0(M), \| \kappa\|_{\infty}$, constants in Proposition \ref{Prop_Phi}
(which may depend on $T_0$), but they are independent of $\e\in (0,1]$.


\subsection{differential energy inequality}

\begin{lemma}
Assume the hypotheses of Proposition \ref{localest}.
Then 
\beq
\frac \partial {\partial y^0} e_\e(v) 
\le 
\e C ( \sum_{a,b=0}^{n-1}a^{\alpha\beta} \partial_{y^\alpha}v\ \partial_{y^\beta}v
%
%
+ |\yn|^2 \ |\partial_\yn v|^2)
+ 
\e \divf_{M^{2\rho}} \vp 
+ 
\kappa\big[ \e v_{\yn}  - f_1(v)\big] \cdot v_{y^0}
\label{eep.prime1}\eeq
 where
\begin{equation}
\vp := (\vp^1,\ldots, \vp^n),
\quad\quad
\quad
\vp^i := 
\ g^{i\alpha }  v_{y^\alpha} \cdot v_{y^0}
\label{vp.def}\end{equation}
and $\divf_{M^{2\rho}}$ denotes the
divergence on $M^{2\rho}$ with respect to the fixed volume form $d(vol)$,
so that 
\beq
\divf_{M^{2\rho}}\vp := \frac 1{\omega} \sum_{i=1}^n \partial_{y^i} (  \omega \, \vp^i
).
\label{divM.def}\eeq
in standard local coordinates on $M^{2\rho}$.
\label{L.eflux}\end{lemma}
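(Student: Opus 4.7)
The plan is to multiply the wave equation \eqref{nlwM} by $v_{y^0}$ and work with the resulting identity, taking special care because the energy density $e_\e(v)$ is built from the modified symmetric form $a^{\alpha\beta}$ rather than $g^{\alpha\beta}$. The first step is to establish the cleaner identity for the ``metric energy'' $E^g_\e(v):=\tfrac{\e}{2}g^{\alpha\beta}v_{y^\alpha}v_{y^\beta}+\tfrac{1}{\e}F(v)$: writing $\Box_{\bf g}v=-\tfrac{1}{\sqrt{-g}}\partial_{y^\alpha}(\sqrt{-g}\,g^{\alpha\beta}v_{y^\beta})$, multiplying the equation by $v_{y^0}$, integrating by parts in $\partial_{y^\alpha}$, and symmetrizing in $\alpha,\beta$, a routine computation yields
\[\partial_{y^0}E^g_\e(v)=\tfrac{\e}{\sqrt{-g}}\,\partial_{y^\alpha}\!\big(v_{y^0}\sqrt{-g}\,g^{\alpha\beta}v_{y^\beta}\big)+\tfrac{\e}{2}(\partial_{y^0}g^{\alpha\beta})v_{y^\alpha}v_{y^\beta}-\kappa f_1(v)v_{y^0}.\]

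The second step is to pass to an identity for $e_\e(v)$. From \eqref{a.def} one sees that $e_\e(v)=E^g_\e(v)-\e g^{0\alpha}v_{y^\alpha}v_{y^0}$, so $\partial_{y^0}e_\e(v)=\partial_{y^0}E^g_\e(v)-\e\partial_{y^0}(g^{0\alpha}v_{y^\alpha}v_{y^0})$. Superficially this subtraction introduces a dangerous $v_{y^0y^0}$ term, but the $\alpha=0$ part of the divergence above, $\tfrac{\e}{\sqrt{-g}}\partial_{y^0}(v_{y^0}\sqrt{-g}g^{0\beta}v_{y^\beta})$, contains exactly the same thing; direct expansion yields the cancellation
\[\tfrac{\e}{\sqrt{-g}}\partial_{y^0}\!\big(v_{y^0}\sqrt{-g}g^{0\beta}v_{y^\beta}\big)-\e\partial_{y^0}\!\big(g^{0\alpha}v_{y^\alpha}v_{y^0}\big)=\e\,g^{0\beta}v_{y^0}v_{y^\beta}\,\tfrac{\partial_{y^0}\sqrt{-g}}{\sqrt{-g}},\]
a harmless bounded first-order term. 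What survives from the divergence is the purely spatial piece $\tfrac{\e}{\sqrt{-g}}\sum_{i=1}^n\partial_{y^i}(\sqrt{-g}\,\vp^i)$ with $\vp^i=g^{i\alpha}v_{y^\alpha}v_{y^0}$.

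The heart of the argument is to convert this spatial divergence into $\e\,\divf_{M^{2\rho}}\vp$ and extract the $\kappa$-combination. Since $\omega_0$ is independent of $y^0$ and $\yn$, one has $\tfrac{1}{\sqrt{-g}}\partial_{y^i}(\sqrt{-g}\,\vp^i)=\divf_{M^{2\rho}}\vp+\sum_{i=1}^{n-1}\vp^i\big[\tfrac{\partial_{y^i}\sqrt{-g}}{\sqrt{-g}}-\tfrac{\partial_{y^i}\omega_0}{\omega_0}\big]+\vp^n\tfrac{\partial_{\yn}\sqrt{-g}}{\sqrt{-g}}$. Invoking \eqref{g3}, $\tfrac{\partial_{\yn}\sqrt{-g}}{\sqrt{-g}}=\kappa+O(|\yn|)$, and the off-diagonal decay from \eqref{gupperab} gives $\vp^n=v_{\yn}v_{y^0}+O((\yn)^2)|v_{\yn}||v_{y^0}|+O(|\yn|)|v_{y^0}|(|v_{y^0}|+\sum_{b<n}|v_{y^b}|)$. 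The leading $\e\kappa v_{\yn}v_{y^0}$ then combines with the $-\kappa f_1(v)v_{y^0}$ produced by the equation to give the advertised $\kappa(\e v_{\yn}-f_1(v))v_{y^0}$.

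All remaining terms are quadratic in the first derivatives of $v$ and are to be absorbed into $\e C\big(\sum_{a,b<n}a^{ab}v_{y^a}v_{y^b}+(\yn)^2 v_{\yn}^2\big)$. The off-diagonal decay $g^{0b},g^{nb}=O(|\yn|)$ for $b<n$, $g^{0n}=O(|\yn|)$, $g^{nn}-1=O((\yn)^2)$ from \eqref{gupperab}, together with the block structure of $\partial_{y^0}g^{\alpha\beta}$ in \eqref{dtgupperab}, guarantee that every cross term involving $v_{\yn}$ comes with a factor of $|\yn|$; weighted Cauchy--Schwarz $|\yn||v_{y^a}||v_{\yn}|\le\tfrac12 v_{y^a}^2+\tfrac12(\yn)^2 v_{\yn}^2$, combined with \eqref{a++} to control tangential squares by $\sum_{a,b<n}a^{ab}v_{y^a}v_{y^b}$, then closes the estimate. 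The main obstacle is exactly this bookkeeping: ensuring no unweighted $v_{\yn}^2$ ever appears in the bound, which rests entirely on the $O(|\yn|)$ off-diagonal decay arranged by the almost-normal coordinates of Proposition \ref{Prop_Phi} and on \eqref{g3}, which pins down the distinguished $\kappa$-contribution.
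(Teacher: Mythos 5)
Your proposal is correct and is essentially the paper's argument: multiply the equation by $v_{y^0}$, recognize that the $a^{\alpha\beta}$-energy collects exactly the $\partial_{y^0}[\cdots]$ terms, use \eqref{g3} to identify the $\kappa$-contribution from the normal derivative of $\sqrt{-g}$, and absorb everything else by weighted Cauchy--Schwarz using \eqref{gupperab}, \eqref{dtgupperab} and \eqref{a++}. The only difference is bookkeeping: the paper switches from the metric divergence to the $\omega$-divergence at the level of the equation (introducing the vector $b^\beta$), whereas you keep $\sqrt{-g}$ and correct afterwards via the intermediate energy $E^g_\e$ — an equivalent reorganization.
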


Note that if we compare \eqref{eep.prime1} to \eqref{polar.energy}, then the term here
corresponding to ``Term 1" in \eqref{polar.energy} has been absorbed into the
first term on the right-hand side of \eqref{eep.prime1}

\begin{proof}In standard local coordinates on $M_T^{2\rho}$, our equation \eqref{nlwM} takes the form
\[
 \frac {-\e}{\sqrt{-g}} \partial_{y^\alpha}\left( {\sqrt{-g}}g^{\alpha\beta}\partial_{y^\beta} v \right)
  + \frac 1 \e f_0(v)  + \kappa f_1(v) = 0.
\]
We rewrite the leading term as a divergence with respect to $d(vol)$ on $M_T^{2\rho}$, 
leading to
\[
-\frac {\e}{\omega} \partial_{y^\alpha}\left( \omega g^{\alpha\beta}\partial_{y^\beta} v \right)
- \e b^\alpha \partial_{y^\alpha} v
+ \frac 1 \e f_0(v) + \kappa f_1(v) = 0
\]
where
\beq
b^\beta :=
\frac {\omega} {\sqrt{-g}}  \ g^{\alpha\beta} \  \partial_{y^\alpha}\left( \frac{ \sqrt{-g}}{\omega} \right) .
\label{b.def}\eeq
%
%
Multiply this by $v_{y^0}$ and rewrite, recalling \eqref{f0f1}, to find that
\[ 
- \frac\e\omega\partial_{y^\alpha}( \omega g^{\alpha\beta} v_{y^\beta} \ v_{y^0})
+\e g^{\alpha\beta} v_{y^\beta}  \, v_{y^0 y^\alpha}
+\frac 1{\e}F(v)_{y^0}  
= 
 \e  b^\alpha v_{y^\alpha}   v_{y^0} -  \kappa f_1(v) \, v_{y^0}
\] 
Note that
\[
g^{\alpha\beta}v_{y^\beta}  v_{y^0y^\alpha} = \frac12 \partial_{y^0} (g^{\alpha\beta} v_{y^\beta} v_{y^\alpha}) - \frac12 g^{\alpha\beta}_{y^0} v_{y^\beta}  v_{y^\alpha}
\]
and that $\partial_{y^0} \omega = \partial_{y^n}\omega = 0$ on $M_T^{2\rho}$.
We use these facts and collect all the terms of the form $\partial_{y^0} [ \cdots]$ to the left-hand side to obtain
\begin{multline} \label{weight1}
\partial_{y^0} \left[ - \e g^{0\beta} v_{y^\beta} \cdot v_{y^0} + \frac\e2 g^{\alpha\beta} v_{y^\beta} \, v_{y^\alpha} + \frac1\e F(v) \right] \\
= 
\frac \e\omega \partial_i ( \omega g^{i\beta}\partial_{y^\beta}v)
+ \frac\e2 g^{\alpha\beta}_{y^0} v_{y^\alpha} v_{y^\beta}
+ \e  b^\alpha v_{y^\alpha}  v_{y^0}
-  \kappa f_1(v) \, v_{y^0}.
\end{multline}
By definition, the left-hand side is just $\partial_{y^0} e_\e(v)$, and the first
term on the right-hand side is exactly $ \e \divf_{M^{2\rho}} \vp$.
It follows from \eqref{dtgupperab} and \eqref{a++} that
\[
g^{\alpha\beta}_{y^0} v_{y^\alpha} v_{y^\beta}
\le C \sum_{a,b = 0}^{n-1} a^{ab}v_{y^a} v_{y^b}.
\]
To estimate the remaining terms on the right-hand side of \eqref{weight1}
first note that
\begin{align*}
\e b^\alpha v_{y^\alpha} v_{y^0} - \kappa f_1(v) v_{y_0}
&=
\e \sum_{a=0}^{n-1}b^a v_{y^a} v_{y^0} \ + 
\e (b^n - \kappa) v_{y^n} v_{y^0}
+ \kappa [\e  v_{y_n} - f_1(v)] v_{y^0}.
\end{align*}
Also, since $\partial_{\yn}\omega = 0$, 
we see that \eqref{g3} states exactly that $|b^n - \kappa| = O(|\yn|)$.
Thus \eqref{a++} implies that
\[
| (b^n-\kappa) v_{y^n} v_{y^0}| \le  C |\yn  v_{\yn} v_{y^0}| \le C \sum_{a,b = 0}^{n-1} a^{ab}v_{y^a} v_{y^b} + (\yn)^2|v_{\yn}|^2
\]
and that
\[
\sum_{a=0}^{n-1}b^a v_{y^a} v_{y^0} \  \le 
C \sum_{a,b = 0}^{n-1} a^{ab}v_{y^a} v_{y^b} .
\]
Thus the Lemma follows by combining these facts with \eqref{weight1}.\end{proof}

\subsection{stability of the profile}

In this section we collect a couple of lemmas that encode some
stability properties of initial data for which $\zeta_i(0)$ is small, $i=1,2$.
These concern functions of a single variable, which we will 
denote $y^n$, since later we will apply these results to 
the functions $\yn \mapsto v(\yt, \yn)$ for $\yt\in M_T$ fixed.

%
\begin{lemma}
There exists a constant $c_6=c_6(\rho)$ such that if $v \in H^1(-\rho,\rho)$ and if 
\beq
\int_{-\rho}^\rho |\yn| \ | v - \sign(\yn)|^2 d \yn 
\le c_6
\label{L.h1}\eeq
then
\beq
\int_{-\rho}^\rho e_{\e,\nu}(v) \, d\yn \ge  c_0-Ce^{-c/\e},
\qquad\quad
\mbox{ for }
e_{\e,\nu}(v) := \frac\e2 |\partial_{\yn} v|^2 + \frac1\e F(v).
\label{L.lbe}\eeq
Moreover, there exists a constant $c_7 = c_7(\rho)>0$ such that if \eqref{L.h1} holds and if
\beq
\int_{-\rho}^\rho e_{\e,\nu}(v) \, d\yn \ - \ c_0 \  \leq c_7
\label{equipart.hyp}\eeq
then for every $\bar \rho \ge \rho$, 
\beq
\int_{-\bar\rho}^{\bar \rho} \frac 12  \left( \sqrt{\e}  v_{\yn}  - \frac {f_1(v)} {\sqrt\e} \right)^2 \ d\yn
\le
C
\left(\int_{-\bar \rho}^{\bar\rho} e_{\e,\nu}(v)  d\yn - c_0  \right) + C e^{-c/\e}
\label{equipart0}\eeq
as long as $v$ is defined on  $(-\bar \rho, \bar\rho)$, with $C$ independent of $\bar \rho$.
\label{L.1}
\end{lemma}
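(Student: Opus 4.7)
The plan is to reduce both claims to the single algebraic identity
\begin{equation*}
\tfrac{1}{2}\!\left(\sqrt{\e}\,v_{\yn} - \tfrac{f_1(v)}{\sqrt\e}\right)^{\!2} \;=\; e_{\e,\nu}(v) - f_1(v)\,v_{\yn},
\end{equation*}
which follows by expanding the square and using $\tfrac12 f_1^2 = F$ from \eqref{f0f1}. Setting $H(v) := \int_0^v f_1(s)\,ds$, the sign condition on $f_1$ in \eqref{f0f1} forces $H$ to attain its maximum $c_0/2$ at $v=1$ and its minimum $-c_0/2$ at $v=-1$, so $H(v(\bar\rho))-H(v(-\bar\rho))\le c_0$ always. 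Together with the AM--GM bound $e_{\e,\nu}(v)\ge|f_1(v)|\,|v_{\yn}|\ge |(G\circ v)_\yn|$, where $G(v):=\int_0^v|f_1(s)|\,ds$ is strictly monotone with $G(1)-G(-1)=c_0$, this packages all the required information.

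For \eqref{L.lbe} I would integrate the AM--GM bound to get $\int_{-\rho}^{\rho}e_{\e,\nu}(v)\,d\yn \ge G(v(y_+))-G(v(y_-))$ for any $y_- \le y_+$ in $[-\rho,\rho]$. The hypothesis \eqref{L.h1}, via Chebyshev on the sub-intervals $(-\rho,-\rho/2)$ and $(\rho/2,\rho)$, provides points $y_\pm$ at which $|v(y_\pm)\mp 1|^2=O(c_6/\rho^2)$; since \eqref{doublewell} gives $|G(v)-G(\pm 1)|\le C(1\mp v)^2$ near $\pm 1$, this yields $\int e_{\e,\nu}(v)\ge c_0 - O(c_6)$. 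To sharpen the error to $Ce^{-c/\e}$ I would bootstrap: either $\int e_{\e,\nu}(v)\ge c_0$ already (and the conclusion is trivial), or $\int F(v)/\e$ is uniformly bounded, which by \eqref{doublewell} forces $\{|v|\le 1-\delta\}$ to have Lebesgue measure at most $C\e/\delta^2$. Combining this measure estimate with \eqref{L.h1} produces sub-intervals of non-negligible length on which $v$ stays $L^\infty$-close to $+1$, and the exponential stability of $v=1$ under the profile ODE $\e q'=f_1(q)$ then promotes $L^\infty$ closeness to pointwise exponential closeness at some $y_+$ (similarly near $-1$).

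For \eqref{equipart0} I would integrate the identity on $(-\bar\rho,\bar\rho)$ and rearrange:
\begin{equation*}
\int_{-\bar\rho}^{\bar\rho}\!\tfrac{1}{2}\!\left(\sqrt{\e}v_{\yn}-\tfrac{f_1(v)}{\sqrt\e}\right)^{\!2}d\yn \;=\;\Bigl(\int_{-\bar\rho}^{\bar\rho}\!e_{\e,\nu}(v)\,d\yn - c_0\Bigr) + \bigl[\bigl(H(1)-H(v(\bar\rho))\bigr) + \bigl(H(v(-\bar\rho))-H(-1)\bigr)\bigr],
\end{equation*}
in which the bracket is non-negative. By \eqref{doublewell} the bracket is controlled by $C[(1-v(\bar\rho))^2+(1+v(-\bar\rho))^2]$, so it suffices to bound $(1\mp v(\pm\bar\rho))^2$ by $C(\int e_{\e,\nu}(v)-c_0)+Ce^{-c/\e}$. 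The energy hypothesis \eqref{equipart.hyp} constrains how far $v$ can stray from $\pm 1$, while \eqref{L.h1} identifies which equilibrium is correct at each end; the same ODE contraction mechanism as above converts these integral statements into the required pointwise exponential bound at $\yn=\pm\bar\rho$.

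The main obstacle is precisely the passage from integral hypotheses to pointwise exponential closeness at the chosen endpoints. Chebyshev alone yields only polynomial closeness; the exponential sharpening, which is what produces the $Ce^{-c/\e}$ term in each claim, requires extracting long sub-intervals on which $v$ stays close to a single equilibrium and then invoking the exponential contraction to $\pm 1$ along the profile ODE $\e q'=f_1(q)$ to upgrade the estimate.
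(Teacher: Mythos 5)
Your algebraic framework is right: the identity $\frac12(\sqrt\e v_{\yn}-f_1(v)/\sqrt\e)^2=e_{\e,\nu}(v)-(Q\circ v)_{\yn}$ (your $H$ is the paper's $Q$) and the Modica--Mortola bound $e_{\e,\nu}(v)\ge|(Q\circ v)_{\yn}|$ are exactly what the paper uses, and you correctly reduce both claims to controlling the nonnegative boundary quantities $Q(1)-Q(v(\bar\rho))$ and $Q(v(-\bar\rho))-Q(-1)$ up to an $O(e^{-c/\e})$ error.

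The gap is the step where you convert the integral hypotheses into pointwise exponential closeness of $v$ itself. You invoke ``exponential stability of $v=1$ under the profile ODE $\e q'=f_1(q)$'', but $v$ is an arbitrary $H^1$ function, not an ODE solution, so no such contraction is available — and in fact the step is impossible as stated. Take $v=q_\e$ on $(-\rho,a)$ with $a\sim\e\ln(1/\e)$ chosen so that $q_\e(a)=1-\e^2$, and $v\equiv 1-\e^2$ on $(a,\rho)$: then \eqref{L.h1} holds with margin, the energy is $c_0+O(\e^3)$ so \eqref{equipart.hyp} also holds, yet $|v-1|\equiv\e^2$ on $(\rho/2,\rho)$ and no point there is exponentially close to $1$. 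The lemma's conclusions are still true for this $v$ (the extra potential energy from lingering at $1-\e^2$ compensates), but your route to them breaks. The paper sidesteps this by replacing $v$ with the comparison function $v_1$ that agrees with $v$ outside $(-\rho,-\frac34\rho)\cup(\frac34\rho,\rho)$ and minimizes $\int e_{\e,\nu}$ on those shoulder intervals; because $v_1$ is a minimizer it has no more energy than $v$ on $(-\rho,\rho)$, and — unlike $v$ — it solves the stationary Allen--Cahn ODE on the shoulders, so a maximum-principle argument yields $\pm v_1(\pm\tfrac78\rho)\ge 1-Ce^{-c/\e}$, hence points $s_\pm\in(-\rho,\rho)$ where $v_1(s_\pm)$ is exponentially (or exactly) at $\pm 1$. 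Both \eqref{L.lbe} and \eqref{equipart0} are then read off from Modica--Mortola applied to $v_1$ together with $\int_{-\rho}^\rho e_{\e,\nu}(v_1)\le\int_{-\rho}^\rho e_{\e,\nu}(v)$ and $v_1=v$ at $\pm\bar\rho$. Without this comparison-function device (or an equivalent one), the exponential endpoint control your argument relies on cannot be obtained.
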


These are largely proved in Lemma  of \cite{j-mms}, but
since we have modified the statement here in some ways, we 
present some details.

\begin{proof}
Steps 3 and 4 of the proof of  Lemma 11 of  \cite{j-mms} show that if $c_6$ and $c_7$ are chosen to be suitably small and  \eqref{L.h1}, \eqref{equipart.hyp} hold, 
then there exists a function $v_1$ and points
$s_-<s_+$ in $(-\rho, \rho)$ such that
\beq
|v_1(s_\pm) - \pm 1| \le C e^{-c/\e},
\quad
\quad
\quad
\quad
v_1(s) = v(s) \quad\mbox{ if }|s| \ge \rho,
\label{v2.prop}\eeq
and 
\beq
\int_{-\rho}^\rho e_{\e,\nu}(v_1) \le 
\int_{-\rho}^\rho e_{\e,\nu}(v).
\label{v1.prop}\eeq
In fact, $v_1$ is found by minimizing $w\mapsto \int e_{\e,\nu}(w)$ 
among the space of functions
that agree with $v$ outside of the intervals
$(-\rho, -\frac 34 \rho) \cup (\frac 34 \rho, \rho)$. 
Then \eqref{v1.prop} is clear, and
a maximum
principle argument, together with \eqref{L.h1}, \eqref{equipart.hyp}
and the choices of $c_6,c_7$, can be used to 
show that $\pm v_1(\pm \frac 78 \rho) \ge 1 - C e^{-c/\e}$.
If $v_1(\frac 78 \rho)\le 1$ we can take $s_+ = \frac 78 \rho$,
and otherwise we can find some $s_+$ near $\frac 78\rho$ where $v_1(s_+)=1$.
The choice of $s_-$ is similar.

Let $Q(s) := \int_0^s  f_1(t) \ dt$, and note from the definition \eqref{f0f1} of $f_1$ that
\beq\label{modmort}
e_{\e,\nu}(w) \ge \sqrt{2 F(w)} |\partial_{\yn}w|   = |f_1(w) \partial_{\yn}w| = 
|\partial_{\yn} Q(w(\yn))| 
\ge 
\partial_{\yn} Q(w(\yn))
\eeq
for every $w$.
Thus
\beq
\int_a^b e_{\e,\nu}(w)d\yn  \ \ge \  |Q(w(b)) - Q(w(b))|
\label{modmort2}\eeq
for every $w\in H^1$ and every $a<b$.
Applying this inequality with $w=v_1$ and $a = s_-, b=s_+$ rather easily yields
\eqref{L.lbe}; see \cite{j-mms} for a little more detail.

To prove \eqref{equipart0}, assume that $v\in H^1( (-\bar \rho, \bar \rho))$ for $\bar \rho \ge \rho$,
and  note that since $F = \frac 12 f_1^2$,
\[
\int_{-\bar\rho}^{\bar \rho} \frac 12  \left( \sqrt{\e}  v_{\yn}  - \frac {f_1(v)} {\sqrt\e} \right)^2 \ d\yn
=
\left(\int_{-\bar \rho}^{\bar \rho} e_{\e,\nu}(v)  d\yn  - c_0\right)  + \left( c_0 -  Q(v(\bar\rho)) + Q(v(-\bar\rho)) \right)\nonumber \\
\]
Since $v = v_1$ at $\pm \bar \rho$ and $c_0 = Q(1)-Q(-1) = Q(v(s_+))-Q(v(s_-)) + O(e^{-c/\e})$,
we again use \eqref{modmort2} to find that
\begin{align*}
c_0  - Q(v(\bar\rho)) + Q(v(-\bar\rho))
&\le
[Q(v_1(s_+)) - Q(v_1(\bar \rho)) ]
+
[Q(v_1(-\bar\rho)) - Q(v_1(s_-))] + C e^{-c/\e}\\ 
&\le
\int_{-\bar \rho}^{s_-}e_{\e, \nu}(v_1) + \int_{s_+}^{\bar \rho} e_{\e, \nu}(v_1)  + C e^{-c/\e}\\
&=
\left(\int_{-\bar \rho}^{s_-}e_{\e, \nu}(v_1) + \int_{s_+}^{\bar \rho} e_{\e, \nu}(v_1) + c_0 \right) - c_0 + C e^{-c/\e }.\\
\end{align*}


And again using the choice of $s_\pm$ and \eqref{modmort2},
we have
\[
c_0 \  \le \  Q(v_1(s_+) ) - Q(v_1(s_-)) + C e^{-c/\e}
\ \le \  \int_{s_-}^{s_+} e_{\e,\nu}(v_1) + C e^{-c/\e}.
\]
The proof of \eqref{equipart0} is completed by
combining the last three estimates and recalling \eqref{v1.prop}.

\end{proof}

Our next result is exactly Lemma 12 in \cite{j-mms}. Here, in view of future applications, 
we write $v$ as a function of two variables, $y^0\in(0,\tau)$ and $\yn\in (-\rho,\rho)$. 
\begin{lemma}
Let $v\in H^1((0,\tau)\times(-\rho,\rho))$ for some $\tau>0$.
Then there exists a constant $C$, depending on $\rho$ but independent of $\tau$ and of $\e\in(0,1]$, such that
$$
\int_{(-\rho,\rho)} |\yn|\ |v(0,\yn)-v(\tau,\yn)|^2 \, d\yn 
	\le C \int_{(0,\tau)\times(-\rho,\rho)} \frac\e2 v_{y^0}^2 + \frac{(\yn)^2}{\e} F(v) \, d\yn \, dy^0.
$$
\label{L.2}\end{lemma}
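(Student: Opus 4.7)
The obstacle to overcome is that the naive Cauchy--Schwarz bound
$$|v(0,\yn)-v(\tau,\yn)|^2 \le \tau \int_0^\tau v_{y^0}^2(s,\yn)\,ds$$
has the wrong $\tau$-dependence; the potential term $\int F(v)$ must be used to eliminate it. I plan to reduce the lemma to the $\tau$-uniform pointwise-in-$\yn$ estimate
$$|v(0,\yn)-v(\tau,\yn)|^2 \le C\left(\frac{\e}{|\yn|}\int_0^\tau v_{y^0}^2\,ds \;+\; \frac{|\yn|}{\e}\int_0^\tau F(v)\,ds\right),$$
after which the lemma follows immediately by multiplying by $|\yn|$ and integrating over $\yn\in(-\rho,\rho)$.

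For the pointwise estimate, I would apply a weighted Cauchy--Schwarz to the identity $v(0,\yn)-v(\tau,\yn) = -\int_0^\tau v_{y^0}(s,\yn)\,ds$: for any positive weight $w(s,\yn)$,
$$|v(0,\yn)-v(\tau,\yn)|^2 \le \Big(\int_0^\tau w^{-1}\,ds\Big)\Big(\int_0^\tau w\,v_{y^0}^2\,ds\Big).$$
A natural interpolating choice is $w(s,\yn) = \e/|\yn| + |\yn| F(v(s,\yn))/\e$. The second factor $\int_0^\tau w\,v_{y^0}^2\,ds$ is then controlled by the two terms on the right-hand side of the pointwise estimate, the kinetic summand giving the first term and the potential summand bounded by AM--GM against $\int(\yn)^2 F(v)/\e$. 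The first factor $\int_0^\tau w^{-1}\,ds$ is the delicate one: on the subset of $[0,\tau]$ where $F(v)\lesssim \e^2/(\yn)^2$ (i.e.\ where $v$ is close to an equilibrium $\pm 1$) it is bounded by the length of that set times $|\yn|/\e$, and the structure $(1-|v|)^2 \lesssim F(v)$ near $\pm 1$ lets this length be absorbed into $\int F(v)$ via a Chebyshev-type argument; on the complement, $w^{-1}$ is small by construction.

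\textbf{The main obstacle} is making the bound on $\int_0^\tau w^{-1}\,ds$ uniform when $v$ takes values outside $[-2,2]$, where \eqref{doublewell} does not pin down $F$ quantitatively. A cleaner route that partially sidesteps this difficulty uses the Modica--Mortola-style identity
$$\tfrac{\e}{2} v_{y^0}^2 + \tfrac{(\yn)^2}{\e}F(v) \;\ge\; |\yn|\,|v_{y^0}|\sqrt{2F(v)} \;=\; |\yn\,\partial_{y^0}Q(v)|,\qquad Q(u):=\int_0^u \sqrt{2F(t)}\,dt,$$
which integrated in time gives $|\yn|\,|Q(v(\tau,\yn))-Q(v(0,\yn))| \le \int_0^\tau(\text{integrand})\,ds$; the near-equilibrium asymptotics $|v-(\pm 1)|^2 \lesssim |Q(v)-Q(\pm 1)|$ then convert this control of $|Q(v(0))-Q(v(\tau))|$ into the desired control of $|v(0)-v(\tau)|^2$, after splitting the $\yn$-integral into near-equilibrium and bulk regions and using Fubini to recombine.
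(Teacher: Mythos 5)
The paper does not prove this lemma; it simply quotes it as ``exactly Lemma 12 in \cite{j-mms}.'' But the proof technique used in that reference (and visible in this paper's own Lemma~\ref{L.1}, see \eqref{modmort}--\eqref{modmort2}) is precisely your second, Modica--Mortola--style route, so the comparison is to that.

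Your second route is correct in outline and is the right argument. Writing $Q(u)=\int_0^u \sqrt{2F}$, Young's inequality gives
$\tfrac{\e}{2} v_{y^0}^2 + \tfrac{(\yn)^2}{\e}F(v) \ge |\yn|\,|\partial_{y^0}Q(v)|$, and integrating over $y^0\in(0,\tau)$ for a.e.\ fixed $\yn$ yields
$\int_0^\tau(\cdots)\,dy^0 \ge |\yn|\,|Q(v(\tau,\yn))-Q(v(0,\yn))|$. The one nontrivial fact that then closes the argument is the two-sided comparison
\begin{equation*}
|a-b|^2 \le C\,|Q(b)-Q(a)| \qquad\text{for } a,b\in[-2,2],
\end{equation*}
which follows from \eqref{doublewell} (so $\sqrt{2F}\gtrsim |1-|u||$ on $[-2,2]$) by a short compactness argument; multiplying by $|\yn|$ and integrating in $\yn$ then gives the lemma directly. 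Two corrections to your write-up: (i) the comparison you need is the displayed one with two free arguments $a,b$ (both of $v(0,\yn)$ and $v(\tau,\yn)$ are unknown); your statement ``$|v-(\pm1)|^2\lesssim |Q(v)-Q(\pm1)|$'' is only the special case where one endpoint sits at an equilibrium and does not by itself cover, e.g., $v(0)\approx -1$, $v(\tau)\approx +1$; (ii) there is no need for the ``near-equilibrium/bulk split of the $\yn$-integral with Fubini'' --- once the pointwise comparison is in hand, one just multiplies by $|\yn|$ and integrates.

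Your first route does not work as described. After weighted Cauchy--Schwarz the factor $\int_0^\tau w^{-1}\,ds$ cannot be bounded uniformly in $\tau$: on any time set where $v$ equals an equilibrium, $F(v)=0$, so $w^{-1}=|\yn|/\e$ there and the ``Chebyshev absorption into $\int F(v)$'' has nothing to absorb into. (For constant $v$ the product is $0\cdot\infty$, which is formally fine, but there is no clean quantitative way to exploit the product structure here.) Moreover the second factor $\int_0^\tau w\,v_{y^0}^2\,ds$ contains the cross term $\frac{|\yn|}{\e}\int F(v)\,v_{y^0}^2$, which is not controlled by $\frac{(\yn)^2}{\e}\int F(v)$ by ``AM--GM'' as claimed. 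The Young inequality that makes everything work is exactly the Modica--Mortola one, so you should simply discard the Cauchy--Schwarz scheme.

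Finally, the concern you flag about values of $v$ outside $[-2,2]$ is genuine: \eqref{doublewell} gives no quantitative lower bound on $F$ there, and $|a-b|^2\le C|Q(b)-Q(a)|$ can fail for an $F$ that decays at infinity. This is a gap in the hypotheses \emph{as stated in this paper}; the original Lemma~12 of \cite{j-mms} is proved under conditions on $F$ that rule this out (in effect a global coercivity of $F$ away from $\pm1$). You are right to notice it, but it is an issue with the quoted hypotheses rather than something your argument can or should repair from \eqref{doublewell} alone.
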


\subsection{weighted energy estimate}

Now we give the 
\begin{proof}[Proof of Proposition \ref{localest}]
We will write $\zeta_0 := \max\big( \zeta_1(0), \zeta_2(0) \big)$.

{\bf Step 1. } 
Since
\[
|v(s,y,\yn) -\sign(\yn)|^2 \le 2\left(
|v(s,y,\yn) - v(0,y, \yn)|^2 + |v(0,y, \yn) -\sign(\yn)|^2\right),
\]
we find from Lemma \ref{L.2} that
\begin{align}
\zeta_2(s) 
	& \leq 2 \zeta_2(0) + 2 \int_{M}\int_{-\rho/2}^{\rho/2} \ 
	|\yn| \big| v(s,y', \yn)-v(0,y',\yn) \big|^2 \,  d\yn \ d(vol)_0 \nonumber \\
	& \leq 2 \zeta_0 + C \int_{M} \left( \int_0^s \int_{-\rho/2}^{\rho/2}\  \frac\e2 v_{y^0}^2 + \frac{(\yn)^2}{\e} F(v) \, d\yn\,dy^0 \right) \, d(vol)_0 \nonumber \\
	& \leq 2 \zeta_0 + C \,  \int_0^s \zeta_3(\sigma) \, d\sigma.
\label{zeta2.main}\end{align}

{\bf Step 2.}
For the next few steps of the proof, we regard $s$ as fixed, and we write $v(\cdot)$ rather than
$v(s,\cdot)$.

We will say that  a point $y' \in M$  is {\sl good} if 
\beq
\int_{-\rho/2}^{\rho/2} \ |\yn| \ |v(y', \yn) - \sign(\yn)|^2 d\yn 
 \leq c_6(\rho/2)
\label{goodpt}\eeq
and in addition
\beq
\int_{-\rho(s)}^{\rho(s)} e_{\e,\nu}(v)(y',\yn) d\yn - c_0 \le c_7(\rho/2).
\label{verygoodpt}\eeq
where $c_6,c_7$ were fixed in Lemma \ref{L.1}.
We will say that a point is {\sl bad}  if it is not {\sl good}.

We claim that 
\beq
vol_0 \big ( \{ y' \in M: y' \text{ is {\sl bad}} \} \big)
\le  C\left( \zeta_1(s) + \zeta_2(s) \right)  + C e^{-c/\e}.
\label{badpts.est}\eeq
To prove this, first note that by Chebyshev's inequality,
\begin{align}
vol_0 ( \{ y' \in M:  \mbox{  \eqref{goodpt} fails}\} \big)
	&\leq \frac{1}{c_6}
	\int_M \int_{-\rho/2}^{\rho/2} \ |\yn| \ |v(y', \yn) - \sign(\yn)|^2 d\yn 
	 \, d(vol)_0 \nonumber \\
&	= C \zeta_2(s).
\label{smallbad}
\end{align}
Next, note that
\[
\int_{-\rho(s)}^{\rho(s)} e_{\e,\nu}(v)(s,y',\yn) d\yn - c_0 \ge
\begin{cases}
-C e^{-c/\e} &\mbox{if $y'$ is {\sl good}}\\
c_7 &\mbox{ if $y'$ is {\sl bad} and \eqref{goodpt} holds} \\ 
-c_0&\mbox{ always, and in particular if \eqref{goodpt} fails at $y'$,}\\
\end{cases}
\]
where we have used Lemma \ref{L.1} for the first case.  We integrate to find
\begin{align}
\zeta_1(s) &\ge
\int_M \left(\int_{-\rho(s)}^{\rho(s)} e_{\e,\nu}(v)(s,y',\yn) d\yn - c_0 
\right) d(vol)_0\nonumber\\
&\ge
- c_0 \ vol_0 \big( \{ y' \in M:  \mbox{  \eqref{goodpt} fails}\} \big)
\label{good.a}\\
&\qquad\qquad
+ c_7 \ vol_0 \big( \{ y' \in M:  \mbox{ $y'$ is {\sl bad} but \eqref{goodpt}  holds}\} \big)
 - C e^{-c/\e}. \nonumber
\end{align}
Using \eqref{smallbad}, we deduce that
\[
(vol)_0 \big( \{ y' \in M:  \mbox{ $y'$ is {\sl bad} but \eqref{goodpt}  holds}\} \big)
\ \le C\left( \zeta_1(s) + \zeta_2(s) \right)  + C e^{-c/\e},
\]
and this together with \eqref{smallbad} implies \eqref{badpts.est}.

{\bf Step 3. } Next we estimate $\zeta_3(s)$. We claim that
\beq
\zeta_3(s) \leq \zeta_1(s) +C \zeta_2(s) +C e^{-c/\e}
\label{zeta3.main}\eeq
for every $s$. 
The choice \eqref{apositive} of $c_2$
implies that 
$$
\big(1+c_2(\yn)^2\big) e_\e(v) \geq \frac \e 4 \sum_{a,b=0}^{n-1} a^{ab} \partial_{y^a}v\, \partial_{y^b}v + \big(1+ (\yn)^2\big) e_{\e,\nu}(v).
$$
By the definitions of $\zeta_1$ and $\zeta_3$, it follows that
$$
\zeta_1(s) \geq  \frac 12\zeta_3(s) +  \int_{M^{\rho(s)}} e_{\e,\nu}(v) \, d(vol) - c_0\  vol_0(M).
$$
So to complete the proof of \eqref{zeta3.main}, it suffices to show that
\beq
c_0\  vol_0(M) -  \int_{M^{\rho(s)}} e_{\e,\nu}(v) \, d(vol)
	\leq C \zeta_2(s) + C e^{-c/\e},
\label{step2.mpmp}\eeq
and this follows directly from \eqref{good.a} and \eqref{smallbad}.

{\bf Step 4. } We now claim that
\beq
\zeta_1'(s) \leq C (\zeta_1(s) +  \zeta_2(s)+  \zeta_3(s) )+ Ce^{-c/\e}.
\label{zeta1.main}\eeq%
Recalling the definition $\rho(s) := \rho - c_5 s$, we compute $\zeta_1'(s) = I_1 -  c_5 I_2$, where
\begin{align*}
I_1 & := \int_{\{s\}\times M^{\rho(s)}} \big( 1+(\yn)^2\big) \partial_{y^0} e_\e (v) \, d(vol)  \\
I_2 & := \int_{\{s\}\times M} \big( 1+(\yn)^2\big) e_\e(v) \, d(vol)_0 \bigg|_{\yn=-\rho(s)}^{\yn= \rho(s)}
\end{align*}
It follows directly from the differential energy inequality of Lemma \ref{L.eflux} that 
\begin{align}
I_1 
&\le
C \zeta_3(s) + 
I_{1a} + I_{1b}\label{I1split}
\end{align}
where
\begin{align*}
I_{1a} &:= \int_{\{s\}\times M^{\rho(s)}} \e (1+c_2(\yn)^2) \divf_{M^\rho}\vp \ d(vol),\\
I_{1b} &:= 
\int_{\{s\}\times M^{\rho(s)}} \e (1+c_2(\yn)^2) \ \kappa \,\left( v_{\yn} - \frac1\e f_1(v)\right) \ d(vol).
\end{align*}
{\bf Step 5}. To bound $I_{1a}$, 
note that
\[
 \big( 1+(\yn)^2\big) \divf_{M^{2\rho}} \vp  =  \divf_{M^{2\rho}}  \big(\big( 1+(\yn)^2\big)\vp \big)
 - 2\yn \vp^n
\]
by \eqref{divM.def}, since $\omega$ is independent of $\yn$. 
It is easy to see 
from the definition \eqref{vp.def} of $\vp$ that 
\[
 |\yn| \ |\vp^n|  \overset{\eqref{vp.def}}{ = }   |\yn| \ |g^{n\alpha} \partial_{y^\alpha} v\ \partial _{y^0}v| \le C  \sum_{a,b=0}^{n-1} a^{ab} \partial_{y^a}v\, \partial_{y^b}v +  \e^{-1}|\yn|^2e_{\e, \nu}(v)
\]
and it follows that
\[
\e \int_{\{s\} \times M^{\rho(s)} } |\yn| \ |\vp^n| d(vol) \ \le C \zeta_3(s).
\]
For the other term, 
note that
$\partial (\{s\}\times M^{\rho(s)}) = \{s\} \times M \times \{ -\rho(s),  \rho(s) \}$ (appropriately oriented), and that the induced volume form on $\partial M$
is just $(vol)_0$.
This yields
\[ 
\e \int_{\{s\}\times M^{\rho(s)}}  \divf_{M^{2\rho}}  \big( 1+(\yn)^2 \vp \big)\, d(vol)
	 = \e \int_{\{s\}\times M} \big( 1+(\yn)^2\big) |\vp^n| \, d(vol)_0 \bigg|_{\yn=-\rho(s)}^{\yn=\rho(s)}
\]
Next, the choice \eqref{a+++} of $c_5$ 
was arranged exactly so that
$\e |\vp^n|  \le c_5 e_\e(v)$, so it follows from the above that
\beq\label{I.est1}
I_{1a} - c_5 I_2 \le C \zeta_3(s).
\eeq

\noindent
{\bf Step 6}.
We estimate $I_{1b}$ as follows.
First,
\begin{multline*}
I_{1b} 
	\ \leq \  \frac {\e  \|\kappa\|_\infty}2 \int_{\{s\}\times  M^{\rho(s)}} 
	\left[ |v_{y^0}|^2 +  \left( v_{\yn} - \frac1\e f_1(v) \right)^2 \right] \, d(vol) \\
	\ \leq \ C \zeta_3(s) + 
	C \int_{\{s\}\times  M^{\rho(s)}}   \left(\sqrt{\e} v_{\yn} - \frac {  f_1(v)}{\sqrt \e}\ \right)^2
	  \, d(vol) .
\end{multline*}
Note that Lemma \ref{L.1}  implies that if $y'$ is  {\sl good} in the sense of
\eqref{goodpt}, \eqref{verygoodpt}, then
\[
\int_{-\rho(s)}^{\rho(s)} \left(\sqrt{\e} v_{\yn} - \frac {  f_1(v)}{\sqrt \e}\ \right)^2 d\yn \ \le 
C\left( \int_{-\rho(s)}^{\rho(s)} e_{\e,\nu}(v) d\yn - c_0 \right)+ C e^{-c/\e}
\]
at $y'$.
Integrating this, we find that
\begin{align*}
&\int_{\{ y' \in M: y' \text{ is {\it good}}\}} 
\int_{-\rho(s)}^{\rho(s)} 
 \left(\sqrt{\e} v_{\yn} - \frac {  f_1(v)}{\sqrt \e}\ \right)^2
 d\yn \ d(vol)_0
\nonumber\\
&\qquad\qquad \le 
C\int_{\{y' \in M: y' \text{ is {\it  good}}\}} 
\left( \int_{-\rho(s)}^{\rho(s)} e_{\e,\nu}(v) d\yn - c_0 \right)
\ d(vol)_0
+ C e^{-c/\e}
\nonumber\\
&\qquad\qquad	
\leq C\Big( \zeta_1(s) + c_0 \  vol_0(\{ y' \in M: y'\mbox{ is  {\sl bad}} \})\Big)
 + C e^{-c/\e}.
\end{align*}
Combining these estimates with \eqref{badpts.est}, we conclude that
$I_{1b}\le C (\zeta_1(s) + \zeta_2(s) + \zeta_3(s) + e^{-c/\e})$,
and this together with \eqref{I1split} and \eqref{I.est1} implies \eqref{zeta1.main}.

%

{\bf Step 7.} Having proved \eqref{zeta1.main}, \eqref{zeta2.main}, and \eqref{zeta3.main},
the conclusions of the Proposition follow by a Gr\"onwall inequality argument, 
exactly as in the proof of Proposition 10 in \cite{j-mms}.
%
%
%
%
%
\end{proof}

\section{Proof of Theorem \ref{T.mink}}\label{S:pot3.2}

In this section we prove our main theorem.

\subsection{construction of initial data}
We will prove that the conclusions of the theorem are satisfied by  a smooth solution
$u:N_{T_0}\to \R$ of \eqref{nlwN} with initial data
\[
(u, \partial_{x^0}u)|_{x^0=0} = (u_0, u_1) \quad\mbox{ constructed below}.
\]
To define $u_0,u_1$, we first define $\phi_0: M^{2\rho}\to \Sigma_0$ by requiring that
\[
\phi(0, y', \yn) = (0,\phi_0(y', \yn)).
\]
This definition makes sense in view of \eqref{Phi_b}. 
Next, we set
\beq
u_0 = \sign_{I_0},
\qquad
u_1= 0  \qquad\qquad \mbox{ in }\Sigma_0 \setminus \mbox{Image}(\phi_0),
\label{data.far}\eeq
where
$I_0 := \{ x \in \Sigma_0\cong \R^n : (0,x)\in I\}$.
In Image$(\phi_0)$, it is convenient to specify initial data in term of the 
$y$ coordinates introduced in Proposition \ref{Prop_Phi}.
We would like $v = u \circ \phi$ to satisfy
\beq
v =v_0, \qquad \partial_{y^0}v = 0 \qquad\mbox{ in }M^{2\rho},
\label{v.data}\eeq
when $y^0=0$,
where
\beq
v_0(y', \yn) = \bar q_\e(\yn) :=
\chi_\rho(\yn) q(\frac \yn \e) + (1-\chi_{\rho}(\yn))\sign(\yn)
\label{v0.def}\eeq
and $\chi_\rho\in C^\infty(\R)$ satisfies \eqref{chi.def}.
Thus, we complete the definition of $u_0$ by 
\beq
u_0 = v_0 \circ \phi_0^{-1} \mbox{ in } \mbox{Image}(\phi_0).
\label{u0.near}\eeq
We specify $u_1$ in Image$(\phi_0)$ by requiring that
\[
0 = \partial_{y^0} v = (\partial_{x^\alpha} u\circ \phi) \ \partial_{y^0} \phi^\alpha,
\]
when $y^0 = 0$. Thus we define $u_1 = \partial_{x^0}u|_{x^0=0}$ in Image$(\phi_0)$ 
by the identity
\beq
(u_1\circ \phi) \ \partial_{y^0}\phi^0 = - \sum_{i=1}^n (\partial_{x^i} u_0\circ \phi) \ \partial_{y^0} \phi^i
\qquad\mbox{ in }\{0\} \times M_\rho.
\label{u1.near}\eeq
The construction of $\phi$ implies that $\partial_{y^0}\phi^0$ does not vanish
in $\{0\}\times M_\rho$, so $u_1$ is well-defined,
and \eqref{v.data} holds as desired.

Observe that the definitions 
imply that $u_0$ and $u_1$ are  constant, hence smooth,
near $\partial (\mbox{Image}(\phi_0))$. As a result they are smooth everywhere.

\subsection{first estimates of $v$}\label{firstv}

As remarked above, $v= u \circ \phi$ solves \eqref{nlwM}
(where we are abusing notation and writing $\kappa$ in place of $\kappa\circ \phi$)
with initial data satisfying \eqref{v.data} and coefficients of
the metric tensor satisfying the conclusions of Proposition \ref{Prop_Phi}.
Thus Proposition \ref{localest} is applicable, and in 
particular $v$ satisfies \eqref{Ploc.c1}.
A routine computation, using \eqref{decay} and the rightmost inequality in \eqref{apositive},
shows that for this choice of initial data,
the quantity $\max\big( \zeta_1(0), \zeta_2(0) \big)$ appearing in \eqref{Ploc.c1} satisfies
$
\zeta_0 \le C \e^2$.
In particular it follows that 
\beq
\zeta_3(s) =
\int_{ \{s\}\times  M^{\rho(s)}}
\frac \e 2\sum_{a,b = 0}^{n-1} a^{ab} \partial_{y^a}v\,  \partial_{y^b}v +  (\yn)^2 \left[  \frac \e 2  |\partial_{\yn} v|^2 + \frac 1{\e}F(v) \right] \ d(vol)  \le C \e^2
\label{pf_main1}\eeq
for $0 \le s \le s_1 = \max(T, \rho/(3 c_5))$.

\subsection{short-time estimates of $u$ in the transition region}

Next we define
\[ 
e_\e(u; {\bf h}) 
:= 
\frac \e  2 \left(- h^{00}(\partial_{x^0}u)^2 +
\sum_{i,j=1}^n  h^{ij}
\partial_{x^i}u\, \partial_{x^j}u\right) + \frac 1 \e F(u).
\] 
Note that our assumptions \eqref{lorentzian} on the metric ${\bf h}$
imply that 
\[
- h^{00}(\partial_{x^0}u)^2 +
\sum_{i,j=1}^n  h^{ij}
\partial_{x^i}u\, \partial_{x^j}u
 \ \approx |Du|^2 := \sum_{\alpha=0}^n( \partial_{x^\alpha}u)^2
\]
in $N$,
where $A \approx B$ means that there exists some constant $C$
such that $C^{-1}B \le A \le CB$ pointwise. One can also check
that 
\beq
e_\e(u; {\bf h})\circ \phi \approx e_\e(v ; {\bf g}) 
\qquad\quad\mbox{ in $M_T^{2\rho}$.} 
\label{comparable}\eeq

We next define a smooth cutoff function $\chi^u:N_T\to \R$ such that
$\chi^u = 1$ outside of $N' = \mbox{Image}(\phi)$, and in
$N'$, we require that
\[
\chi^u \circ \phi(y) =  1 - \chi_\rho(\yn),\qquad \mbox{ where $\chi_\rho:\R\to \R$ is defined in \eqref{chi.def}.}
\]
Then $\chi^u$ is smooth,  and $\chi^u = 0$ near $\Gamma$. In particular, 
$\chi^u \circ \phi(y) = 0$ if $|\yn| \le \frac 13 \rho$, and $\nabla\chi^u \circ \phi(y) = 0$
unless $\frac 13 \rho \le |\yn| \le \frac 23 \rho$.
Now fix $t_1>0$ such that
\beq\label{t1.choice}
\{ x\in N : 0 \le x^0 \le t_1 ,  \nabla\chi^u(x) \ne 0\}
\subset \left\{ \phi(y) : y\in M_T^{2\rho}, 0 \le y^0  \le s_1, \frac 13 \rho  \le |\yn | \le \frac 23 \rho\right\}.
\eeq

\begin{center}
\begin{figure}[!htbp]
\includegraphics*[width=350pt]{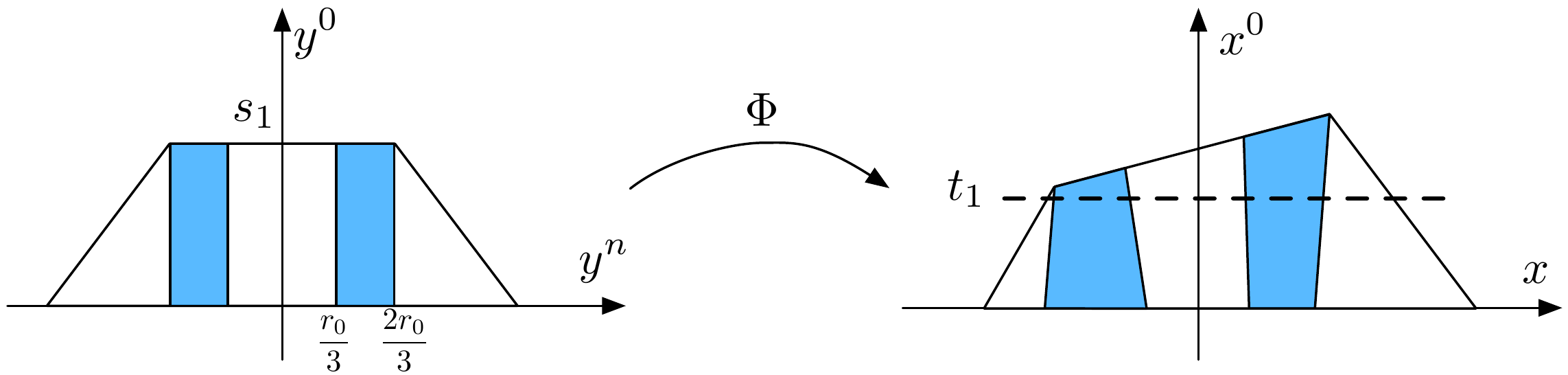}
\caption{}
\label{fig.t1_choice}
\end{figure}
\end{center}

Using \eqref{comparable} and \eqref{pf_main1}, we deduce that
\begin{align}
\int_{ \{x\in N : 0 < x^0 < t_1,  D\chi^u \ne 0\} }
e_\e(u;{\bf h})\, dx\ 
&\ \le \ 
C\int_{ \{ y\in M_T^{2\rho} \ :  0 \le y^0  \le s_1, \frac 13 \rho  \le |\yn | \le \frac 23\ } e_\e(v; {\bf g} )\  d(vol) 
\nonumber\\
&\le C \int_0^{s_1} \zeta_3(s) \, ds\nonumber\\
&\le C \e^2.
\label{shoulder}\end{align}
Here and below, we use Remark \ref{rem:uniform} to obtain uniform
bounds on the Jacobian arising in the change of variables.

\subsection{energy estimates for $u$}

Computations very similar to those in the proof of Lemma \ref{L.eflux}
show that
\[
\frac \partial {\partial x^0} e_\e(u;{\bf h}) 
-  
\sum_{i,j=1}^n\partial_{x^i} ( \e  h^{ij} \partial_{x^j}u \, \partial_{x^0}u)
\ = \ 
\frac \e 2 (\partial_{x^0}h^{\alpha\beta}) \partial_{x^\alpha}u\,\partial_{x^\beta}u +
\e b^\beta \, \partial_{x^\beta}u \ \partial_{x^0}u - \kappa f_1(u) \, \partial_{x^0}u
%
\]
where 
\[
b^\beta := \frac 1{\sqrt{-h}}\  h^{\alpha\beta} \partial_{x^\alpha}\sqrt{-h}.
\]
Since $f_1^2(u) = 2F(u)$ by definition, it easily follows that
\beq
\frac \partial {\partial x^0} e_\e(u;{\bf h}) 
\ \le \   
\sum_{i,j=1}^n\partial_{x^i} ( h^{ij} \partial_{x^j}u \, \partial_{x^0}u) + C e_\e(u,{\bf h}).
\label{weh}\eeq
For $0< \tau<t_1$, we deduce that
\begin{align*}
\zeta(\tau) 
&:= 
\int_{\Sigma_\tau} \chi^u e_\e(u;{\bf h}) \\
&\overset{\eqref{weh}}\le
\zeta(0)+  \int_0^\tau \int_{\Sigma_t} \partial_{x^0} \chi^u  \ e_\e(u;{\bf h}) +  
\chi^u  \left[ \sum_{i,j=1}^n\partial_{x^i} ( h^{ij} \partial_{x^j}u \, \partial_{x^0}u) + C e_\e(u,{\bf h})\right]
\\
&\le
\zeta(0) + C \int_0^\tau \int_{\Sigma_t} \chi^u e_\e(u,{\bf h})\ +  \|D\chi^u \|_\infty\int_{\{x : 0<x^0 <t, |D\chi^u|(x)\ne 0\}}e_\e(u;{\bf h}) 
\\
&\le
C \e^2 + C \int_0^\tau \zeta(t) \ dt.
\end{align*}
In the last line we have used \eqref{shoulder} and an estimate of $\zeta(0)$ that
follows easily from our choice of initial data. The integration by parts
is justified since $u(t, \cdot)=-1$ outside a compact set.
Hence we can conclude by Gr\"onwall's inequality that
\beq
\zeta(t) = \int_{\Sigma_t} \chi^u \ e_\e(u;{\bf h}) \ \le \ C \e^2 \qquad \mbox{ for all }0\le t \le t_1.
\label{uuuuu}\eeq

\subsection{iterate}

We now define $\chi^v:M_T^{2\rho}\to \R$ of the form
$\chi^v(y^0,y', \yn) = \bar \chi(|\yn|)$, where
\[
\bar\chi\in C^\infty(\R),\qquad 0\le \bar\chi \le 1, \qquad \bar\chi = 1\mbox{ in }(\frac 12 \rho, \rho),
\qquad
\mbox{supp}(\bar \chi) \subset (\frac \rho 3 , 2\rho). 
\]
By arguing as in the proof of \eqref{shoulder}, we deduce from
\eqref{uuuuu} find that there exists some $s_0 \in (0, s_1)$ such that 
\[
\int_{ \{ y\in M_T^{2\rho} : 0< y^0 <s_0,  \rho < |\yn| \le 2\rho \} }
 e_\e(v  {\bf;g}) d(vol)  \ \le \  C \e^2.
\]
By combining this with \eqref{pf_main1}, we find that
\[
\int_{ \{ y\in M_T^{2\rho} : 0< y^0 <s_0,   D\chi^v \ne 0 \} }
 e_\e(v  {\bf;g}) d(vol) \ \le \  C \e^2.
\]
Then by using a differential energy inequality satisfied by $v$, see \eqref{weight1}, and
arguing as in the proof of \eqref{uuuuu}, we find that
\[
\int_{ \{s_0\}\times M \times \{ \yn : \frac \rho 2 \le |\yn|\le \rho \}}
 e_\e(v  {\bf;g}) d(vol) \le C \e^2.
\]
Recalling from Section \ref{firstv} that $\zeta_1(s_0) \le C \e^2$ we deduce that $\zeta_1(0;s_0) \le C \e^2$, for 
\beq
\zeta_1(s; s_0) 
:= 
\ 
\left. \int_{M^{\rho(s)}} (1+c_2(\yn)^2)\, e_\e(v) d(vol) \right|_{y^0=(s+s_0)} - c_0 vol_0(M) \ 
\label{vvv}\eeq
We also know from Section \ref{firstv} 
that 
\beq
\zeta_2(0; s_0) \le C \e^2,\quad\qquad
\mbox{ for } \ \zeta_2(s;s_0) := \zeta_2(s-s_0).
\label{vvvv}\eeq

Now we have shown that $v|_{y^0 = s^0}$ satisfies
estimates of the same form (though with larger constants) as $v|_{y^0 = 0}$.
We can thus iterate the above argument to extend estimates first of $v$,
then of $u$,
to somewhat longer time intervals. 
We claim that after piecing together finitely many iterations,
we can obtain the estimates
\begin{align}
\int_{N_{T_0}\setminus N'}
e_\e(u; {\bf h}) \ dx 
\ &\le \ 
C \e^2,
\label{final1}\\
\int_0^T \left(\int_{M^{2\rho} } (1+c_2(\yn)^2) e_\e(v ; {\bf g}) d(vol) - c_0 \, vol_0(M) \right)dy^0 \ 
\ &\le \ 
C \e^2,
\label{final2}\\
\int_0^T \int_{M^{2\rho} }  
\frac \e 2\sum_{a,b = 0}^{n-1} a^{ab} v_{y^a} v_{y^b} +  (\yn)^2 \left[  \frac \e 2  |\partial_{\yn} v|^2 + \frac 1{\e}F(v) \right] \ d(vol)
\ &\le \ 
C \e^2.
\label{final3}
\end{align}
A proof that finitely many iterations suffice
is given in 
in \cite[proof of Theorem 22]{j-mms} for $\kappa = 0$
and in flat Minkowski space, but exactly the same proof is valid here. 
The point is that the proof only involves piecing together estimates in the standard and normal
coordinate systems, and the algorithm for doing so applies equally 
in this situation, since the $(h_{\alpha\beta})$ is uniformly comparable
to the Minkowski metric, see \eqref{lorentzian}.

\subsection{conclusion of proof}

Now \eqref{weightedenergy} follows from \eqref{final1} and \eqref{final3},
together with \eqref{approx.d}, \eqref{comparable} and a change of variables.

The other conclusion of Theorem \ref{T.mink}, that is the estimate \eqref{u.L2} of
$\| u - \sign_I\|_{L^2}$, follows from \eqref{final1}, \eqref{final3}
and a Poincar\'e inequality. We omit the details, which
are just a slightly more complicated
version of the argument used to deduce \eqref{u.L2}
from \eqref{eta1est}  (via \eqref{L2forgooddata}) 
and \eqref{u.energy1}  in the simple
model problem in Section \ref{S:simple}.

\section{Proof of Proposition \ref{Prop_Phi}} \label{S:Phi}

In this section we construct a map $\phi: M_T^{2\rho}\to N$ with the
properties summarized in Proposition \ref{Prop_Phi}.

\subsection{construction of $\phi$}
To start, for $\yt\in M_T$ we define $\bar \nu(\yt)\in T_{\Psi(\yt)}\Gamma \subset T_{\Psi(\yt)}N$
to be the unit normal to $\Gamma$, so that
\beq\label{nu.char}
{\bf h}(\bar \nu , \bar \nu) = 1, \qquad {\bf h}( \bar \nu, \tau) = 0\mbox{ for all }\tau\in T_{\Psi(\yt)}\Gamma.
\eeq
We will fix a sign by requiring that $\bar \nu$ point ``into $I$", see  below. These conditions uniquely determine $\bar\nu$,
and our assumptions imply that $\yt \mapsto \bar\nu(\yt)$ is smooth.
We next define $\wt \phi:M_{T^*}^{2\rho} \to N$,
for $\rho>0$, 
by
\beq
\wt \phi(\yt, \yn)  := \exp_{\Psi(\yt)} (\yn \bar\nu(\yt)).
\label{Phi.def}\eeq
The condition that $\bar\nu$ point into $I$ means that 
$\wt \phi(\yt, \yn)\in I$ for all sufficiently small $\yn>0$.
Thus $\wt\phi$ exactly determine Gaussian normal coordinates 
for $N$ near $\Gamma$.

We will sometimes write $(\wt\phi^0,\ldots, \wt\phi^n)$ to denote the components
of $\wt\phi$ in the fixed coordinate system on $N$. Definition \eqref{Phi.def}
states that the components satisfy the system of 
differential equations
\beq
(\frac{\partial}{\partial \yn})^2 \wt\phi^\alpha + \Gamma^\alpha_{\mu\nu} 
\frac{\partial \wt\phi^\mu}{\partial \yn}
\frac{\partial \wt\phi^\nu}{\partial \yn} = 0,
\qquad
\wt\phi^\alpha(\yt, 0) = \psi^\alpha(\yt), 
\quad
\frac{\partial \wt\phi^\alpha}{\partial \yn}(\yt,0) = \bar\nu^\alpha(\yt),
\label{geodesics}\eeq
where $\bar\nu^\alpha$, $\alpha = 0,\ldots, n$ are the components of $\bar\nu$
and 
\beq
\Gamma^\alpha_{\mu\nu} =
\frac 12 h^{\alpha\beta}\left(
\frac{\partial }{\partial y^\mu} h_{\beta\nu}
+
\frac{\partial }{\partial y^\nu} h_{\mu\beta}
-
\frac{\partial }{\partial y^\beta} h_{\nu\mu}
\right)
\label{christoffel}\eeq
are the usual Christoffel symbols.

Finally, we define 
\[
\phi(y^0,\ldots, y^n) :=  \wt \phi(y^0 - \sigma(y', y^n), y^1,\ldots, y^n)
\]
where $\sigma:M^{2\rho}\to \R$ is chosen exactly so that
\eqref{Phi_b} holds. Thus, we require that $\sigma$ satisfy
\beq
\wt\phi^0( - \sigma(y', \yn) , y', \yn) = 0
\quad\mbox{ in }M^{2\rho}.
\label{sigma.def}\eeq
The next lemma implies that the definition of $\phi$ makes sense.

\begin{lemma}
For $\rho$ sufficiently small, there exists  $\sigma:M_\rho\to \R$ satisfying 
\eqref{sigma.def} and in addition
\beq
|\sigma(y',\yn) | \le C (\yn)^2.
\label{sigma.quadratic}\eeq
\end{lemma}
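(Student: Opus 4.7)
The plan is to solve \eqref{sigma.def} by the implicit function theorem and then extract the quadratic vanishing from the orthogonality condition \eqref{zerovelocity} combined with the block-diagonal structure \eqref{hsplit} of the metric.

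First I would set $F(y^0, y', \yn) := \wt\phi^0(y^0, y', \yn)$ and observe that, by \eqref{Phi.def} and \eqref{Psi1}, $\wt\phi(\yt, 0) = \Psi(\yt) = (y^0, \psi(\yt))$, so $F(y^0, y', 0) = y^0$ for every $y'$. In particular $F(0,y',0) = 0$ and $\partial_{y^0} F(0,y',0) = 1$. The implicit function theorem therefore gives, for each $y'\in M$, a smooth function $\yn \mapsto \sigma(y',\yn)$ defined on some interval $(-\rho(y'), \rho(y'))$ and satisfying $\wt\phi^0(-\sigma(y',\yn), y', \yn) = 0$ with $\sigma(y',0) = 0$. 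Since $M$ is compact and all the relevant data are smooth, $\rho(y')$ can be taken uniform in $y'$, and we may shrink $\rho$ once so that $\sigma$ is defined and smooth on all of $M^{2\rho}$.

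Next I would differentiate the defining identity with respect to $\yn$ to get
\[
-\,\partial_{y^0}\wt\phi^0\bigl(-\sigma(y',\yn), y', \yn\bigr)\,\partial_{\yn}\sigma(y',\yn) + \partial_{\yn}\wt\phi^0\bigl(-\sigma(y',\yn), y', \yn\bigr) = 0.
\]
Evaluating at $\yn=0$ and using $\sigma(y',0)=0$, $\partial_{y^0}\wt\phi^0(0,y',0)=1$, and the initial condition in \eqref{geodesics}, namely $\partial_{\yn}\wt\phi^0(0,y',0) = \bar\nu^0(0,y')$, yields $\partial_{\yn}\sigma(y',0) = \bar\nu^0(0,y')$. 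So the key point reduces to showing $\bar\nu^0(0,y')=0$.

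For this, I would use \eqref{zv2}, which says $\partial_{y^0}\psi(0,y')=0$; combined with \eqref{Psi1} this gives $\partial_{y^0}\Psi(0,y') = (1,0,\ldots,0) = \partial_{x^0}$. Since $\bar\nu(\yt)$ is orthogonal to $T_{\Psi(\yt)}\Gamma$ by \eqref{nu.char}, it is in particular orthogonal to $\partial_{y^0}\Psi(0,y') = \partial_{x^0}$. Hence
\[
0 \;=\; {\bf h}(\bar\nu,\partial_{x^0})(0,y') \;=\; h_{\alpha 0}\,\bar\nu^\alpha(0,y') \;=\; h_{00}\,\bar\nu^0(0,y'),
\]
where the last equality uses \eqref{hsplit}. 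Since $h_{00}\le -c_1<0$ by \eqref{lorentzian}, this forces $\bar\nu^0(0,y')=0$, and therefore $\partial_{\yn}\sigma(y',0) = 0$. Combining $\sigma(y',0)=0$ and $\partial_{\yn}\sigma(y',0)=0$ with smoothness and compactness of $M$, Taylor's theorem with integral remainder gives $|\sigma(y',\yn)| \le C(\yn)^2$ uniformly on $M^{2\rho}$, completing the proof.

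The only genuine subtlety is showing $\bar\nu^0(0,y')=0$, and that is precisely where the orthogonality hypothesis \eqref{zerovelocity} and the split form of the metric both enter; everything else is a direct application of the implicit function theorem and Taylor expansion.
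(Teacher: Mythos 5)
Your proof is correct and follows essentially the same route as the paper: apply the implicit function theorem (uniformized via compactness of $M$) to solve \eqref{sigma.def}, reduce the quadratic bound to showing $\partial_{\yn}\sigma(y',0)=\bar\nu^0(0,y')=0$, and obtain that vanishing by pairing $\bar\nu$ against $\partial_{y^0}\Psi(0,y')$ using \eqref{Psi1}, \eqref{zv2}, and the block structure \eqref{hsplit} of $\bf h$. The only cosmetic difference is that you spell out a few intermediate steps (e.g.\ that $\partial_{y^0}\Psi(0,y')=\partial_{x^0}$ and that $h_{00}\neq 0$) which the paper leaves implicit.
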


\begin{proof}
The definitions \eqref{Phi.def} and \eqref{Psi1} 
of $\wt \phi$ and $\Psi$ implies that $\wt\phi^0(\yt, 0) = y^0$.
Thus it is clear that 
$\partial_{y^0}\wt \phi^0(\yt, 0) = 1$.
Since $M$ is compact,
the implicit function theorem thus implies that, taking $\rho$
smaller if necessary, there exists
a function $\sigma :M^{2\rho} \to \R$ such that
\eqref{sigma.def} holds. 
To prove \eqref{sigma.quadratic}, it suffices (again
using the compactness of $M$) to show that
\beq
\sigma(y',0) = 0 , \qquad  \frac{\partial \sigma}{\partial y^n}(y', 0)= 0 \qquad\mbox{ for every }y'\in M.
\label{sigquad}\eeq
The first of these assertions is clear.
For the second, we differentiate \eqref{sigma.def}
with respect to $y^n$ and evaluate at a point
$(y', 0)$, to find that
\[
\frac{\partial\wt  \phi^0}{\partial y^n}(0,y', 0)
=
\frac {\partial \wt \phi^0}{\partial y^0}(0,y', 0) \frac{\partial \sigma}{\partial y^n}(y',0)
=  \frac{\partial \sigma}{\partial y^n}(y', 0).
\]
So in view of \eqref{geodesics}, to complete the proof of \eqref{sigquad}
it suffices to prove that $\bar \nu^0(0,y') = 0$ for all $y'\in M$.
But this follows from noting that at points of the form $(0,y')$,
\[
0 \ = \ 
{\bf h}( \bar \nu , \frac{\partial \Psi}{\partial y^0})
\ = \ 
h_{\alpha\beta}\, \bar \nu^\alpha \, \frac{\partial \Psi^\beta}{\partial y^0}
\ \overset{\eqref{Psi1},\eqref{zv2}}
{\ = \ } \ 
h_{\alpha 0}\, \bar \nu^\alpha   
\ \overset{\eqref{hsplit}}
{\ = \ } \ 
h_{00}\bar \nu^0.
\]
\end{proof}

Our arguments will imply that $\phi$ is locally invertible in a neighborhood of
every point of $M_{T^*}$, and it follows from this that for every
$T\in (T_0, T^*)$ there exists $\rho>0$ such that $\phi$
is a diffeomorphism of $M_T^{2\rho}$ onto its image. 
We henceforth assume that this holds. 
We will also feel free to decrease the size of $\rho$ throughout
our argument.

\subsection{estimates of components of the metric tensor}

We next prove \eqref{glowerab}, \eqref{gupperab}, and \eqref{dtgupperab}.
We will use the notation
\[
\wt g_{\alpha\beta} := {\bf h}( 
\frac {\partial \wt \phi}{\partial y^\alpha},
\frac {\partial \wt \phi}{\partial y^\beta}),
\qquad
(\wt g^{\alpha\beta}) := (\wt g_{\alpha\beta})^{-1}, \qquad 
\wt g:= \det (\wt g_{\alpha\beta})
\]
for $\alpha,\beta=0,\ldots, n$.
We first remark that
\beq
(\wt g_{\alpha\beta})(\yt, \yn) = 
\left(
\begin{array}{ll}
(\gamma_{ab})(\yt) &0\\
0& \mbox{1}
\end{array}\right) +
\left(
\begin{array}{ll}
O(|\yn|) &0 \\
0& 0
\end{array}\right) 
\label{tglowerab}\eeq
(in block  form), where $(\gamma_{ab})$ was introduced in
\eqref{gammaab}.
The estimate $\wt  g_{ab}(\yt,\yn) = \gamma_{ab}(\yt) + O(\yn)$
for $a,b<n$ is immediate when $\yn=0$, since $\wt\phi(\yt,0) = \Psi(\yt)$, and then follows by 
the smoothness of $\wt \phi$. The claim above that
$\wt g_{\alpha n} = \wt  g_{n,\alpha} = \delta_{n\alpha}$ for all $(\yt, \yn)$
is standard; see for example Wald \cite{Wald} Section 3.3.

It is convenient to define $\widehat \phi(y^0,\ldots, \yn) := 
(y^0 - \sigma(y', \yn), y^1,\ldots, \yn)$, so that 
$\phi = \wt \phi \circ\widehat \phi$. Then the definitions
imply that
\beq \label{39} 
g_{\alpha\beta}(y) 
\ = \ 
\wt  g_{\mu\nu}(\hat \phi(y)) 
\frac {\partial  \widehat\phi^\mu}{\partial y^\alpha}(y)\ 
\frac {\partial \widehat\phi^\nu}{\partial y^\beta}(y) .
\eeq 
It follows from \eqref{sigma.quadratic} that $g_{\mu\nu}(\hat \phi(y))  = g_{\mu\nu}(y) + O((\yn)^2)$
and that $\frac {\partial \sigma}{\partial y^i} = O(|\yn|)$  for $i\ge 1$
and from \eqref{Psi2} and  \eqref{tglowerab} that 
$\wt g_{0i} = \wt g_{i0} = O(|\yn|)$ for $i\ge 1$.
From these one can check that
\[
\wt g_{\alpha\beta}(y) = g_{\alpha\beta}(y) +O((\yn)^2).
\]
It follows from this and \eqref{tglowerab} that
\beq
( g_{\alpha\beta})(\yt, \yn) = 
\left(
\begin{array}{ll}
(\gamma_{ab})(\yt) &0\\
0& \mbox{1}
\end{array}\right) +
\left(
\begin{array}{ll}
O(|\yn|) &O(|\yn|) \\
O(|\yn|)& O((\yn)^2)
\end{array}\right) 
\label{glowerab.bis}\eeq
which is \eqref{glowerab}.
Then \eqref{gupperab} follows by elementary linear algebra considerations;
see for example Lemma 26 in \cite{j-mms}.

We must also estimate $\partial_0g^{\alpha\beta}$.
To do this, we follow
\cite{j-mms} and 
differentiate the identity $g^{\alpha\beta} g_{\beta\gamma} = \delta^\alpha_\gamma$
and rearrange to find that
\[
\partial_{y^0} g^{\alpha\beta} = -g^{\alpha\mu} \ \partial_0 g_{\mu\nu}\  g^{\nu\beta}.
\]
Next, after differentiating \eqref{39} and using \eqref{tglowerab},
some calculations show that 
\[
(\partial_0 g_{\mu\nu}) = \left(
\begin{array}{ll}
O(1) &O(|\yn|) \\
O(|\yn|)& O((\yn)^2)
\end{array}\right),
\]
and then \eqref{dtgupperab} can be deduced from this and the estimates  \eqref{gupperab}
for $(g^{\alpha\beta})$ found above.

\subsection{ mean curvature in almost-normal coordinates}\label{ss:mc}

We next prove \eqref{g3},
which states that
\beq
\frac {-1}{\sqrt{-g}} g^{n\alpha} \partial_{y^\alpha}\sqrt{-g} = - \kappa(\yt)
+O(|\yn|).
\label{bbis}\eeq
where the left-hand side is evaluated at
$(\yt,\yn)$ and $-\kappa(\yn)$ is the mean curvature of $\Gamma$ at $\phi(\yn)$,
since $\Gamma$ is assumed to satisfy \eqref{pmcm}. Since the left-hand side
is smooth, it suffices to check this when $\yn=0$.

To do this, we simply verify that the left-hand side of
\eqref{bbis} equals the mean curvature of $\Gamma$ with
respect to the $y$ coordinates. In these coordinates, $\Gamma$ is
simply the set $\{y\in M_T^{2\rho} : \yn = 0\} = M_T$,
and the unit normal to $\Gamma$  by construction is  the
vector  $ \frac{\partial}{\partial \yn}$.
In general, the mean curvature is the
trace of the second 
fundamental form $A$, 
where $A:T_y\Gamma\times T_y\Gamma \to \R$ is defined by 
\[
A(X,Y) := {\bf g}( \nu, \nabla_X Y ).
\]
Thus
\[
\mbox{mean curvature } \ = \   
g^{\alpha\beta} A( 
\frac {\partial }{\partial y^\alpha} , 
\frac {\partial }{\partial y^\beta} )
\ = \ 
g^{\alpha\beta} {\bf g}( \frac{\partial}{\partial y^n} , \nabla_{ 
\frac {\partial }{\partial y^\alpha} }
\frac {\partial }{\partial y^\beta} )
\ = \ g^{\alpha\beta} \Gamma^n_{\alpha\beta}.
\]
And from the standard expression for
the Christoffel symbols, see \eqref{christoffel},
and using \eqref{glowerab} with $\yn=0$, we compute that
\beq
g^{\alpha\beta} \Gamma^n_{\alpha\beta}
\ = \ -\frac 12 g^{\alpha\beta}\partial_{y^n} g_{\alpha\beta}.
\label{mc.nc}\eeq
On the other hand, a standard computation shows that
\[
\frac {-1}{\sqrt{-g}} g^{n\alpha} \partial_{y^\alpha}\sqrt{-g} =
\, -\,\frac 12 g^{n\alpha} g^{\mu\nu} \partial_{y^\alpha} g_{\mu\nu},
\]
and by \eqref{gupperab},
this agrees with the right-hand side of \eqref{mc.nc} when $\yn=0$.

\begin{remark}
The above proof shows that, in addition to \eqref{tglowerab}, we have
\[
(\wt g^{\alpha\beta})(\yt, \yn) = 
\left(
\begin{array}{ll}
(\gamma^{ab})(\yt) &0\\
0& \mbox{1}
\end{array}\right) +
\left(
\begin{array}{ll}
O(|\yn|) &0 \\
0& 0
\end{array}\right) ,
\]
\[(\partial_0 \wt g^{\alpha\beta})(\yt, \yn) = 
\left(
\begin{array}{ll}
O(1) &0\\
0& 0
\end{array}\right),
\]
and 
\[
\frac{- 1}{\sqrt{-\wt g}} \wt g^{n\alpha}\partial_{y^\alpha} \sqrt{-\wt g}  \ = \ 
\frac {-1}{\sqrt{-\wt g}} \partial_{\yn} \sqrt{-\wt g}  \ = \ 
-\kappa(\yt) + O(|\yn|).
\]
In particular the first two of these facts, together with \eqref{tglowerab}, are
somewhat better than the corresponding properties of $(g_{\alpha\beta})$.
\end{remark}

\subsection{solution of the eikonal equation}
Finally, it is well-known that if we define $d_\Gamma := \pi^n \circ \wt \phi^{-1}$,
where $\pi^n(y^0,\ldots, y^n) = y^n$,
then $d_\Gamma$ satisfies \eqref{dGamma} in $\mbox{Image}(\phi)$.
Indeed, the definitions imply that $d_\Gamma = 0$ on $\Gamma$,
and we have fixed signs such that $d_\Gamma>0$ in $\mbox{Image}(\phi)\cap I$.
Finally, by inverting the definition
\[
\wt g_{\alpha\beta} =  h_{\mu\nu}\ \partial_{y^\alpha}\wt \phi^\mu\  \partial_{y^\beta}\wt\phi^\nu 
\]
and expressing $(D\wt\phi)^{-1}$ in terms of $D (\wt\phi^{-1})\circ \phi$, we find that
\[
\wt g^{\alpha\beta}\circ \phi^{-1} = h^{\mu\nu} \partial_{x^\mu}(\wt\phi^{-1})^\alpha 
 \partial_{x^\nu}(\wt\phi^{-1})^\beta .
\]
Thus the fact that $\tilde g^{nn} = 1$ states exactly that $h^{\alpha\beta}\partial_{x^\alpha}d_\Gamma \ \partial_{x^\beta} d_\Gamma =1$.

Finally, we deduce from \eqref{sigma.quadratic} that
$\phi = \wt \phi$ and $D\phi = D \wt \phi$ everywhere on $\{ y^n=0\}$,
and it follows from the inverse function theorem that $\phi^{-1} = \wt\phi^{-1},
D(\phi^{-1}) = D(\wt \phi^{-1})$ everywhere on $\Gamma$.
This implies that $\phi^{-1} - \wt \phi^{-1} = O(d_\Gamma^2)$,
and since $d_\Gamma = (\wt\phi^{-1})^n$, 
we arrive at \eqref{approx.d}.


\begin{thebibliography}{10}



\bibitem{aw}
D.G.Aronson\ and H. Weinburger,
Multidimensional nonlinear diffusion arising in population genetics, Adv. in Math., {\bf 30} (1978),  33-76.

\bibitem{bss}
G. Barles, H. M. Soner\ and\ P. E. Souganidis, Front propagation and phase field theory, SIAM J. Control Optim. {\bf 31} (1993), no.~2, 439--469.

\bibitem{bno}
G. Bellettini, M. Novaga,\ and\ G. Orlandi,  Time-like Lorentzian minimal submanifolds as
singular limits of nonlinear wave equations, preprint (2008)



\bibitem{Coleman77}
S. Coleman, Fate of the false vacuum: semiclassical theory, 
Phys. Rev. D, {\bf  15} (1977) 2929-2036.




\bibitem{fm}
P.C. Fife and B.M. McLeod, The approach of solutions of nonlinear differential diffusion equations to travelling solutions, Arch. Rational Mech. Anal., {\bf 65} (1977), pp. 335-361.

\bibitem{gs}
S. Gustafson\ and\ I. M. Sigal, Effective dynamics of magnetic vortices, Adv. Math. {\bf 199} (2006), no.~2, 448--498. 

\bibitem{ht}
J. E. Hutchinson\ and\ Y. Tonegawa, Convergence of phase interfaces in the van der Waals-Cahn-Hilliard theory, Calc. Var. Partial Differential Equations {\bf 10} (2000), no.~1, 49--84.



\bibitem{j-wave}
R. L. Jerrard, Vortex dynamics for the Ginzburg-Landau wave equation, Calc. Var. Partial Differential Equations {\bf 9} (1999), no.~1, 1--30. 

\bibitem{j-mms} 
R.L. Jerrard, Defects in semilinear wave equations and timelike minimal surfaces in
Minkowski space, Anal PDE {\bf 4} (2011), no.~2, 285-340.



\bibitem{kpp}
A.N. Kolmogorov,\  I.G. Petrovskii,\ and N.S.\ Piskunov,
\'Etude de l'equations de la chaleur, de la mati\'ere et son application \'a
un probl\`eme biologique, Bull. Moskov. Gos. Univ. Mat. Mekh.  {\bf 1}, (1937) 1Ð25.

\bibitem{l-wave}
F. H. Lin, Vortex dynamics for the nonlinear wave equation, Comm. Pure Appl. Math. {\bf 52} (1999), no.~6, 737--761.


\bibitem{mil}
O. Milbredt, The Cauchy Problem for Membranes, Dissertation, Freie Universit\"at Berlin  (2008).


\bibitem{pr}
F. Pacard\ and\ M. Ritor\'e, From constant mean curvature hypersurfaces to the gradient theory of phase transitions, J. Differential Geom. {\bf 64} (2003), no.~3, 359--423. 

\bibitem{rn}
H. G. Rotstein\ and\ A. A. Nepomnyashchy, Dynamics of kinks in two-dimensional hyperbolic models, Phys. D {\bf 136} (2000), no.~3-4, 245--265. 


\bibitem{soner}
H. M. Soner, Front propagation, in {\it Boundaries, interfaces, and transitions (Banff, AB, 1995)}, 185--206, Amer. Math. Soc., Providence, RI. 


\bibitem{stu}
D. M. A. Stuart, The geodesic hypothesis and non-topological solitons on pseudo-Riemannian manifolds, Ann. Sci. \'Ecole Norm. Sup. (4) {\bf 37} (2004), no.~2, 312--362. 



\bibitem{Wald}
R. Wald,
{\it General Relativity}, University of Chicago Press, 1994.

\end{thebibliography}
\end{document}